\definecolor{tuklblue}{RGB}{0,95,140}
\setlist[enumerate,1]{label={\upshape{\roman*)}},itemsep=.25\baselineskip,topsep=.25\baselineskip}
\setlist[enumerate,2]{label={\upshape{\alph*)}},itemsep=.25\baselineskip,topsep=.25\baselineskip}
\newcommand{\dx}{\mathrm{d}}
\newcommand{\tT}{\mathrm{T}}
\newcommandx{\abs}[2][1=\@empty]{#1\lvert #2 #1\rvert}
\newcommandx{\norm}[3][1=\@empty,3=\@empty]{#1\lVert #2 #1\rVert_{#3}}
\renewcommand{\vec}[1]{\mathbf{#1}}
\newcommand{\grid}{\mathcal{G}}
\newcommand{\NN}{\mathbb{N}}
\DeclareMathOperator*{\argmin}{arg\,min} 
\DeclareMathOperator{\prox}{prox} 
\DeclareMathOperator{\trace}{trace}
\DeclareMathOperator{\diffeo}{\mathscr{A}}
\DeclareMathOperator{\TV}{TV}
\newtheorem{definition}{Definition}[subsection]
\newtheorem{theorem}[definition]{Theorem}
\newtheorem{lemma}[definition]{Lemma}
\newtheorem{corollary}[definition]{Corollary}
\newtheorem{remark}[definition]{Remark}
\newcommand{\externalFolder}{imgpdf/}
\newcommand{\externalOnly}[1]{
	\ifthenelse{\boolean{useExternalization}}{#1}{}%
}
	\tikzstyle{point}=[inner sep=3ptpt, outer sep=0pt,fill=black]%
\def\addlegendimage{\csname pgfplots@addlegendimage\endcsname}
\pgfplotsset{
	/pgfplots/uxbox/.style ={
		legend image code/.code={
			\draw[thin] (-0.08cm,-0.08cm) rectangle ++(0.16cm,0.16cm); 	
			\draw[fill] (0.4,-0.08cm) rectangle ++(0.16cm,0.16cm); 		
			\draw[->] (0,0)-- ++(0.3cm,0);
			\draw[->] (0.48,0)-- ++(0.3cm,0);
		}
	}
}
\pgfplotsset{
	/pgfplots/uybox/.style ={
		legend image code/.code={
			\draw[thin] (-0.08cm,-0.08cm) rectangle ++(0.16cm,0.16cm);
			\draw[->] (0,0)-- ++(0,0.3cm);			
			\draw[fill] (0.4,-0.08cm) rectangle ++(0.16cm,0.16cm); 			
			\draw[->] (0.48,0)-- ++(0,0.3cm);
		}
	}
}
\DeclareMathOperator{\RR}{\mathbb{R}}
\begin{document}
\title{Regularization of Inverse Problems via\\
Time Discrete Geodesics in Image Spaces 
}

\author{
	Sebastian Neumayer\footnotemark[1] \and Johannes Persch\footnotemark[1] \and Gabriele Steidl\footnotemark[1] \footnotemark[2]}

\maketitle
\footnotetext[1]{Department of Mathematics,
	Technische Universität Kaiserslautern,
	Paul-Ehrlich-Str.~31, D-67663 Kaiserslautern, Germany,
	\{sneumaye,persch,steidl\}@mathematik.uni-kl.de.} 
\footnotetext[2]{Fraunhofer ITWM, Fraunhofer-Platz 1,
	D-67663 Kaiserslautern, Germany}

\begin{abstract}
	\noindent\small
	This paper addresses the solution of inverse problems in imaging given an additional reference image.
	We combine a modification of the discrete geodesic path model for image metamorphosis with a variational model,
	actually the $L^2$-$TV$ model, for image reconstruction.
	We prove that the space continuous model has a minimizer which depends in a stable way from the input data.
	Two minimization procedures which alternate over the involved sequences of deformations and images in different ways
	are proposed.
	The updates with respect to the image sequence exploit recent algorithms from convex analysis to minimize the $L^2$-$TV$ functional.
	For the numerical computation we apply a finite difference approach on staggered grids together with a multilevel strategy.
	We present proof-of-the-concept numerical results for sparse and limited angle computerized tomography 
	as well as for superresolution demonstrating the power of the method.
\end{abstract}

\section{Introduction} \label{sec:intro}
In certain applications it makes sense  to account for qualitative prior image information to improve the image reconstruction.
Typical examples are image superresolution and computerized tomography (CT)
with sparsely or limited angle sampled sinogram data.
Earlier approaches to incorporate prior knowledge on the image into CT include phase field methods \cite{BMVC2016,NSB2017},
the application of level set techniques, in particular when combining registration with segmentation \cite{SPSM2018},
as well as the utilization of local (shape) descriptors \cite{NAM2017,YZL2018}.
Recently, a mathematical classification of artifacts from arbitrary incomplete $X$-ray tomography data 
using the classical filtered backprojection was given in \cite{BFJQ2018}.
For earlier papers on the this topic the reader may also consult
\cite{FQ2013,FQ2016,Ka1997,Ngu2015}.

In this paper, we incorporate a whole reference image into the reconstruction process 
and take its deformation towards the image of interest, which is only indirectly given by measurements, 
into account.
Recent work in this direction shows promising results.
Schumacher, Modersitzki and Fischer \cite{SMF09}
have dealt with the combined reconstruction and motion
correction in SPECT imaging.
Karlsson and Ringh \cite{KR17} coupled the optimal transport model with inverse problems.
Chen and \"Oktem \cite{CO18} tackled hard inverse problems with shape priors 
under the name \textit{indirect image registration} 
within the large deformation diffeomorphic metric mapping (LDDMM) framework
and in an earlier paper \cite{OECDRB18} 
via linearized deformations. 
The authors use ODE constrained problem formulations, 
where the regularization of the deformations exploits reproducing kernel Hilbert spaces.
As a drawback, the LDDMM \cite{BMTY2005,CRM96, DGM98,Tro95,Tro98} based methods can only deal with images having the same intensities.
The metamorphosis model of Miller, Trouv\'e and Younes \cite{MY2001,TY2005a,TY2005b}
is an extension of the LDDMM approach which allows the variation of the image intensities along trajectories of the pixels.
A comprehensive overview over the topic is given in the book \cite{Younes2010} as well as in the review article \cite{MTY15}.
For a historic account see also \cite{MTY02}.
In a recent preprint, Gris, Chen and \"Oktem \cite{GO18} have enlarged the ideas in \cite{CO18,OECDRB18} to the metamorphosis setting.

In our paper, we also follow the metamorphosis idea, but in a completely different way than in \cite{GO18}.
We built up on the time discrete geodesic calculus proposed for shape spaces by Rumpf and Wirth \cite{RW13,RW15}
and for images by Berkels, Rumpf and Effland~\cite{BER15}. 
For convergence of the time discrete path model to the metamorphosis one we refer to these papers.
Here deformations are modeled via a smoothness term and the linearized elastic potential, which is
also a usual choice in registration problems.
We combine this model with  a ,,usual'' variational image reconstruction model,
actually the $L^2$-$TV$ model, which originated from \cite{ROF92}. 
Inspired by compressive sensing \cite{CRT2006,Don2006} such variational image reconstruction techniques 
with sparsity-exploiting priors have achieved impressive reductions in sampling requirements.
Besides TV priors, 
wavelet, shearlet- and curvelet representations \cite{CEGL2010,Fri2013} were exploited 
in CT reconstructions with incomplete data.

Let ${\mathcal X},{\mathcal Y}$ be Hilbert spaces and $A \in L({\mathcal X}, {\mathcal Y})$ a linear, continuous operator.
A typical space ${\mathcal X}$ will be the space of square integrable function $L^2(\Omega)$ defined over some image domain $\Omega \subset \mathbb R^2$.
We want to reconstruct an unknown image $I_{\mathrm{orig}} \in {\mathcal X}$ 
having the following information available:
\begin{itemize}
\item[I1)] an image $B = A I_{\mathrm{orig}} + \eta \in {\mathcal Y}$, where $\eta$ denotes some small error, e.g.~due to noise.
\item[I2)] a reference image $R$ which is similar to the original image.
\end{itemize}
A usual variational model to approximate $I_{\mathrm{orig}}$ from $B$ using only I1) is given by 
\begin{equation} \label{mod:inverse}
\argmin_{I \in {\mathcal X}} {\mathcal E}(I;B)  \coloneqq {\mathcal D} (I;B) + \alpha {\mathcal P} (I), \quad \alpha \ge 0,
\end{equation}
where ${\mathcal D}$ is a data term and ${\mathcal P}$ a prior or regularizer.
A model for edge-preserving image reconstruction is the $L^2$-$\TV$ model, which will be  our model of choice.

To incorporate the reference image $R$, we want to combine model \eqref{mod:inverse} with a modified version of the time discrete geodesic
model for image metamorphosis \cite{BER15}. 
Given a template image $I_0 = T$ and a reference image $I_K = R$, this model aims
to find a chain of smooth deformations $(\varphi_0,\ldots,\varphi_{K-1})$
from an appropriately defined admissible set $\diffeo$ 
together with a sequence of images $(I_1,\ldots,I_{K-1})$
such that the sum of the quadratic distances
\begin{equation}\label{image_approx}
\sum_{k=0}^{K-1} \left \|I_{k} \circ \varphi_k^{-1} - I_{k+1} \right \|^2_{{\mathcal X} }
\end{equation}
together with a prior 
\[\sum_{k = 0}^{K-1}\int_\Omega \nu\lvert D^m\varphi_k\rvert^2 + W(D\varphi_k)\, \dx x, \quad \nu > 0,\]
on the deformations becomes small, see Fig.~\ref{fig:sequence}.
The first part of the deformation regularization enforces the smoothness of the mappings, while the
second term, circumscribed by $W$, will be chosen as linearized elastic potential.
By \eqref{image_approx}, the image sequence $(T,I_1,\ldots,R)$
may differ from  the deformed image sequence  
$(T , T \circ \varphi_0^{-1}, \ldots, T \circ \varphi_0^{-1} \circ \ldots \circ \varphi_{K-1}^{-1})$,
which makes the model flexible  for intensity changes.

\begin{figure}[t]
	\centering
	\includegraphics[width = .98\textwidth]{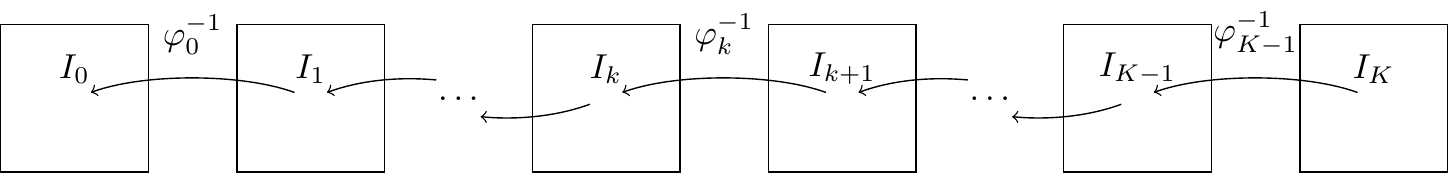}%
	\caption{
	Illustration of the image and diffeomorphism path, where 
	$I_{k+1}(x) \approx I_k (\varphi_k^{-1}(x))$,
	$k=0,\ldots,K-1$. \label{fig:sequence}}
\end{figure}

For the numerical solution of our model 
we propose two different procedures, namely
proximal alternating linearized minimization (PALM) \cite{BST14} 
and an alternating minimization approach related to \cite{BER15,NPS17}. 
For the later one, recent primal-dual minimization algorithms from convex analysis 
are merged with a Quasi-Newton approach from image registration.

\paragraph{Outline of the Paper}
In Section \ref{sec:prelim}, the necessary preliminaries 
concerning the spaces of deformations and images are introduced.
In particular, we highlight properties of the concatenations of admissible deformations and $L^2$ images.
This motivates the modification of the time discrete path model \cite{BER15} 
and also of our generalized model for manifold-valued images in \cite{NPS17}.
In Section \ref{sec:model_cont}, our space continuous reconstruction model is established.
Since it combines \textbf{t}ime \textbf{d}iscrete \textbf{m}orphing with \textbf{inv}erse problems we call it TDM-INV.
We prove that the functional has a minimizer and that the minimizer depends stably on the input data.
Further, a convergence result for decreasing noise is provided.
Section \ref{sec:disc_minimization} deals with two minimization procedures.
For minimizing the image sequence we incorporate
primal-dual algorithms  from convex analysis.
Further, we explain computational issues in the space discrete setting.
The numerical examples in Section \ref{sec:numerics} demonstrate the very good performance of our algorithm.
We finish with conclusions in Section \ref{sec:conclusions}.

\section{Preliminaries} \label{sec:prelim}

In the rest of this paper, let $\Omega \subset \mathbb R^n$ 
be a nonempty, open, connected, and bounded set with Lipschitz boundary.
In this section, we introduce admissible sets $\diffeo$ of deformations 
and consider the concatenation of deformations $\varphi \in \mathcal A$ with images $I\in L^2(\Omega)$.
Note that $I \circ \varphi$ considered in \cite{BER15} is in general not in $L^2(\Omega)$ while we will see that
$I \circ \varphi^{-1} \in L^2(\Omega)$.
Therefore, we prefer to modify the time discrete geodesic path model by using the later concatenation.
Moreover, this fits better to the original metamorphosis setting of Tr\'ouve and Younes.
In \cite{Effland17} the image space $L^\infty(\Omega)$ is proposed instead and in \cite{NPS17} the computations are considerably
simplified by using a set $\mathcal{A}_\epsilon$ with deformations
fulfilling $\mathrm{det} (D\varphi) \ge \varepsilon$ for some fixed $\varepsilon >0$.

 \subsection{Admissible Deformations}\label{sec:admiss}	
First, we introduce the smoothness spaces of our deformation mappings.
Let $C^{k,\alpha}(\overline \Omega)$, $k \in \mathbb N_0$,  denote the H\"older space of functions $f \in C^k(\overline \Omega)$
for which 
	\[
	\|f\|_{C^{k,\alpha}(\overline \Omega)} 
	\coloneqq \sum_{|\beta| \le k} \|D^\beta f\|_{C(\overline \Omega)} 
	+  \sum_{|\beta| = k}   \sup_{\substack{x,y \in \Omega \\ x \not = y}} \frac{\bigl|D^\beta f(x) - D^\beta f(y)\bigr|}{|x-y|^\alpha}
	\]
	is finite. Equipped with this norm $C^{k,\alpha}(\overline \Omega)$ is a Banach space.
	
	By $W^{m,p}(\Omega)$, $m \in \mathbb N$, $1 \le p < \infty$, we denote the Sobolev space of functions having weak derivatives up to order $m$ 
	in $L^p(\Omega)$ with norm
	\begin{equation}
	\lVert f\rVert_{W^{m,p}(\Omega)}^p \coloneqq \sum_{\lvert\alpha\rvert \le m} \int_{\Omega} \lvert D^{\alpha} f\rvert^p \dx x
	\end{equation}
	and semi-norm
	$|D^m f|^p \coloneqq \sum_{\lvert\alpha\rvert = m} \lvert D^{\alpha} f\rvert^p$.
	For vector valued $F = (f_\nu)_{\nu=1}^n$, the component wise norm 
	$
	|D^m F|^p \coloneqq \sum_{\nu =1}^n \lvert D^m f_\nu \rvert^p
	$ is used.
	The space $W^{m,2}(\Omega)$ with
	$m> 1 + \frac{n}{2}$ is of particular interest, since it is compactly embedded in 
	$C^{1,\alpha}(\overline \Omega)$ for all $\alpha \in (0,m-1-\frac{n}{2})$ \cite[Theorem~8.13]{alt2002}
	and consequently also $W^{m,2}(\Omega)  \hookrightarrow W^{1,p}(\Omega)$ for all $p \ge 1$.
	
	It is assumed that the deformations $\varphi$ are elements of the
	following \textit{admissible set}
	\begin{equation}
	\diffeo \coloneqq \left\{\varphi\in \left(  W^{m,2}(\Omega) \right)^n\colon \det(D\varphi) > 0 \text{ a.e.~in } \Omega, \; \varphi(x) = x  \; \mathrm{for} \; x \in \partial\Omega \right\},
	\end{equation}
	where $m> 1 + \frac{n}{2}$.	
	Then, by a result of Ball \cite{Ball1981}, $\varphi$ has the following useful properties
  \begin{itemize}
  \item[i)] $\varphi(\overline{\Omega}) = \overline{ \Omega}$.
  \item[ii)] $\varphi$ maps measurable sets in $\overline{ \Omega}$ to measurable sets in $\overline{ \Omega}$ 
  and the change of variables formula
  \[
  \int_{B} I \circ \varphi \det (D\varphi) \, dx = \int_{\varphi(B)} I \, dy
  \]
  holds for any measurable set $B \subset \overline \Omega$ and any measurable function $I\colon \overline \Omega \rightarrow \mathbb R$
  provided that one of the above integrals exists.
   \item[iii)] 
   $\varphi$ is injective a.e., i.e., the set
   \[
   S \coloneqq \left\{ x \in \overline \Omega\colon \varphi^{-1} (x) \; \mbox{has more than one element}\right\}
   \]
   has Lebesgue measure zero.
   \end{itemize}
By  property i) and since $\overline \Omega$ is  bounded, it follows immediately  
	for all $\varphi \in \diffeo$ that
	\begin{equation} \label{sieben}
	\| \varphi \|_{(L^\infty(\Omega))^n} \le C, \qquad \| \varphi \|_{(L^2(\Omega))^n} \le  C,
	\end{equation}
	with constants depending only on $\Omega$. 
	By the embedding properties of Sobolev spaces it holds $\varphi \in (C^{1,\alpha}(\overline \Omega))^n$.
	Further, by the inverse mapping theorem,  $\varphi^{-1}$ exists locally around a.e.~$x \in \Omega$ 
	and is continuously differentiable on the corresponding neighbourhood.
	However, to guarantee that $\varphi^{-1}$ is continuous (or, even more, continuously differentiable) on $\Omega$ further assumptions
	are required, see \cite[Theorem 2]{Ball1981}.
	A possible counterexample is the function $\varphi(x) \coloneqq x^3$ on $\Omega \coloneqq (-1,1)$, which is in $\diffeo$ but 
	$\varphi^{-1} = \mathrm{sgn} (x) |x|^\frac13$
	is not continuously differentiable.
	
	\subsection{Space of Images}\label{subsec:image}
	In this paper, we consider images as functions in ${\mathcal X} = L^2(\Omega)$.
	Unfortunately, the concatenation of $I \in L^2(\Omega)$ with $\varphi \in \diffeo$ can result in a function
	\[I \circ \varphi \not \in  L^2(\Omega),\]
	as the example 
	$I(x) \coloneqq x^{-\frac14}$ in $L^2((0,1))$ and $\varphi(x) \coloneqq x^2$ shows.
	However, this can be avoided by using
	\begin{equation} \label{alles_ok}
	\varphi \circ I \coloneqq  I \circ \varphi^{-1} \in L^2(\Omega),
	\end{equation} 
	where the function needs to be defined properly.
	To this end, let $\mathcal{N}$ be a Borel null set containing $S$ from iii).
	Then $\mathcal{B} \coloneqq \Omega \backslash \mathcal{N}$ is a Borel set with $\mu(\mathcal{B}) = \mu(\Omega)$.
	Note that $\varphi^{-1} (\mathcal{B})$ is itself a Borel set since $\varphi \in (W^{m,2}(\Omega))^n$ is measurable.
	Consider $\varphi^{-1}\colon \mathcal{B} \rightarrow \varphi^{-1} (\mathcal{B})$ and let $B \subseteq \varphi^{-1} (\mathcal{B})$ be a Borel set.
	Then, by ii), we see that $(\varphi^{-1})^{-1} (B) = \varphi(B)$ is a Borel set, so that $\varphi^{-1}$ is a  measurable function on $\mathcal{B}$.
	For $I \in L^2(\Omega)$ and $\varphi^{-1}$ as above,
	the concatenation $I \circ \varphi^{-1} \colon \mathcal{B} \to \mathbb R$ is measurable if defined as follows
	\begin{equation} \label{def_images}
	 I \circ \varphi^{-1} (x) \coloneqq 
	 \left\{
	 \begin{array}{ll}
	 I \circ \varphi^{-1} (x) & x \in \mathcal{B},\\
	 0                                                    &\text{otherwise.}
	 \end{array}
	 \right.
	\end{equation}
    Then, \eqref{alles_ok} can be verified by
	\[
	\int_{\Omega} \bigl|I \circ \varphi^{-1}\bigr|^2 \, \dx x = \int_{\mathcal{B}} \bigl|I \circ \varphi^{-1}\bigr|^2 \, \dx x
	= 
	\int_{\varphi^{-1}(\mathcal{B})} |I|^2 \det (D \varphi) \, \dx y,
	\]
	which is finite since $D \varphi$ has components in $C^{0,\alpha} (\overline \Omega)$.
	The same argument can be used to show that  $I \circ \varphi^{-1} \in L^p(\Omega)$, $p \in [1,\infty)$
	if
	$I \in L^p(\Omega)$.
	Further, the following lemma on the image of null sets under the deformations $\varphi$ and $\varphi^{-1}$ is useful.
\begin{lemma}\label{lem:null sets}
 For $\varphi \in \mathcal{A}$,  both $\varphi$ and its pre-image deformation $\varphi^{-1}$ 
 map null sets to null sets.
\end{lemma}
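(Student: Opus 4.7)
The plan is to derive both statements from the change of variables formula in property ii), exploiting that $\det(D\varphi) > 0$ a.e.\ to pass from the vanishing of an integral to the vanishing of a measure. Throughout, I use that by the Sobolev embedding $W^{m,2}(\Omega) \hookrightarrow C^{1,\alpha}(\overline\Omega)$, the Jacobian $\det(D\varphi)$ is continuous on the compact set $\overline\Omega$ and hence bounded.

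For the forward direction, let $N \subset \overline\Omega$ be a null set. By property ii), $N$ being measurable implies $\varphi(N)$ is measurable as well. I apply the change of variables formula with $B = N$ and $I = \mathbf{1}_{\varphi(N)}$. Since on $N$ one has $\varphi(x) \in \varphi(N)$ and thus $\mathbf{1}_{\varphi(N)} \circ \varphi \equiv 1$, the left-hand side reduces to $\int_N \det(D\varphi)\,\dx x$, which vanishes because $\det(D\varphi)$ is bounded and $\mu(N)=0$. The right-hand side equals $\mu(\varphi(N))$, yielding $\mu(\varphi(N)) = 0$.

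For the reverse direction, let $N \subset \overline\Omega$ be a null set. The subtlety is that $\varphi^{-1}(N)$ must first be seen to be measurable. For this I use the Borel set $\mathcal{B} = \Omega \setminus \mathcal{N}$ introduced in Section~\ref{subsec:image}, on which $\varphi^{-1}$ is a single-valued measurable map. Setting $B \coloneqq \varphi^{-1}(N) \cap \mathcal{B}$, the set $B$ is measurable and satisfies $\varphi(B) \subseteq N$. Applying the change of variables with $I = \mathbf{1}_N$, the identity $\mathbf{1}_N \circ \varphi \equiv 1$ on $B$ gives
\[
\int_B \det(D\varphi)\,\dx x \;=\; \int_{\varphi(B)} \mathbf{1}_N \,\dx y \;\le\; \mu(N) \;=\; 0.
\]
Since $\det(D\varphi) > 0$ a.e.\ in $\Omega$, standard decomposition over the sets $\{\det(D\varphi) > 1/k\}$ forces $\mu(B) = 0$. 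Finally, $\varphi^{-1}(N) \subseteq B \cup \mathcal{N}$ and $\mathcal{N}$ is null by construction, so $\varphi^{-1}(N)$ is contained in a null set and is therefore itself a null set.

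The only real obstacle is the measurability issue for $\varphi^{-1}(N)$, which is handled by restricting to the good set $\mathcal{B}$ where $\varphi^{-1}$ has been defined as a measurable function; once that is in place, both assertions follow from a single application of the change of variables formula combined with the a.e.-positivity of the Jacobian.
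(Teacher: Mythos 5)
Your proposal is correct in substance, and for the inverse direction it is essentially the paper's argument: the paper argues by contradiction, applying the change of variables formula with $I = 1_{\mathcal N}$ over $\varphi^{-1}(\mathcal N)$ and using $\det(D\varphi)>0$ a.e.\ to force $\mu(\varphi^{-1}(\mathcal N))=0$, which is exactly your ``integrate $\det(D\varphi)$ over $B$ and decompose over $\{\det(D\varphi)>1/k\}$'' step. For the forward direction you genuinely differ: the paper simply notes that $\varphi\in (C^{1,\alpha}(\overline\Omega))^n$ is Lipschitz and cites the textbook fact that Lipschitz maps satisfy Lusin's condition (N), whereas you derive it from property ii) by testing with $I=\mathbf{1}_{\varphi(N)}$ (legitimate, since ii) guarantees $\varphi(N)$ is measurable and the left-hand integral vanishes over the null set $N$). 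Your route has the appeal of using only the stated change-of-variables property, at the cost of leaning on the full strength of ii); the paper's route is shorter and independent of that formula. One small caveat on your reverse direction: measurability of $B$ does not follow from ``$\varphi^{-1}$ is a measurable function on $\mathcal B$'' --- measurable maps need not send measurable sets to measurable sets, and the set-theoretic pre-image of a merely Lebesgue-null $N$ under $\varphi$ need not be measurable either. The clean fix, which is implicitly what the paper does by phrasing the argument for a \emph{Borel} null set, is to replace $N$ by a Borel null hull $N'\supseteq N$; then $\varphi^{-1}(N')$ is Borel because $\varphi$ is continuous, your integral argument gives $\mu(\varphi^{-1}(N'))=0$, and completeness of the Lebesgue measure handles $\varphi^{-1}(N)\subseteq\varphi^{-1}(N')$. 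With that adjustment your proof is complete.
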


\begin{proof}
 	Since $\varphi$ is Lipschitz continuous, it maps null sets to null sets \cite[Theorem~3.33 and it's proof]{Wheeden2015}.
 	Now assume that there exists a Borel null set $\mathcal{N}$ 
	with  $\mu( \varphi^{-1} (\mathcal{N}) )> 0$.
	Using the characteristic function $1_{\mathcal{N}}$ on $\mathcal{N}$, we get the contradiction
	\[
	0 = \int_{\mathcal{N}} 1_{\mathcal{N}} \, \dx x = \int_{\varphi^{-1}(\mathcal{N})} 1_{\mathcal{N}} \circ \varphi \,  \det(D \varphi) \, \dx y
	= \int_{\varphi^{-1}(\mathcal{N})} \det(D \varphi) \, \dx y > 0.
	\]
\end{proof}
Finally, we prove a continuity result for the $L^p(\Omega)$ norm with respect to mappings $\varphi \in \diffeo$.

\begin{lemma}\label{stet_norm}
	Let $I \in L^p(\Omega)$, $p \in [1,\infty)$ and $\{ \varphi^{(j)}\}_{j \in \NN}$ 
	be a sequence of deformations $\varphi^{(j)} \in \mathcal A$
	with	$\lim_{j \to \infty}\Vert \varphi^{(j)} - \hat \varphi \Vert_{(C^{1, \alpha}(\Omega))^n} = 0$
	for some $\hat \varphi \in \mathcal A$. 
	Then it  holds
	\[\lim_{j \to \infty} \bigl\| I \circ (\varphi^{(j)})^{-1} -  I \circ \hat \varphi^{-1} \bigr \|_{L^p(\Omega)} = 0.\]
\end{lemma}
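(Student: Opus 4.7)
The plan is to reduce to the case of continuous $I$ via density and then use dominated convergence, controlling the approximation error uniformly in $j$ by the change of variables formula from property ii).

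First, fix $\varepsilon > 0$ and choose $\tilde I \in C(\overline\Omega)$ with $\|I - \tilde I\|_{L^p(\Omega)} < \varepsilon$. Splitting with the triangle inequality gives
\[
\bigl\|I \circ (\varphi^{(j)})^{-1} - I \circ \hat\varphi^{-1}\bigr\|_{L^p(\Omega)}
\le \bigl\|(I-\tilde I)\circ(\varphi^{(j)})^{-1}\bigr\|_{L^p(\Omega)}
+ \bigl\|\tilde I\circ(\varphi^{(j)})^{-1} - \tilde I\circ\hat\varphi^{-1}\bigr\|_{L^p(\Omega)}
+ \bigl\|(I-\tilde I)\circ\hat\varphi^{-1}\bigr\|_{L^p(\Omega)}.
\]
For the two outer terms I would apply the change of variables formula exactly as done after \eqref{def_images} to obtain $\|(I-\tilde I)\circ(\varphi^{(j)})^{-1}\|_{L^p(\Omega)}^p \le \|\det(D\varphi^{(j)})\|_{L^\infty(\Omega)} \|I-\tilde I\|_{L^p(\Omega)}^p$, and similarly for $\hat\varphi$. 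Because $\varphi^{(j)} \to \hat\varphi$ in $(C^{1,\alpha}(\overline\Omega))^n$, the Jacobian determinants are uniformly bounded in $L^\infty(\Omega)$, so both outer terms are controlled by $C\varepsilon$ independently of $j$.

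It remains to show that the middle term tends to zero. For this I would establish the pointwise convergence $(\varphi^{(j)})^{-1}(y) \to \hat\varphi^{-1}(y)$ for a.e. $y \in \Omega$. Fix such a $y$ and set $x_j \coloneqq (\varphi^{(j)})^{-1}(y)$, so $\varphi^{(j)}(x_j) = y$. Since $\overline\Omega$ is compact, every subsequence has a further subsequence $x_{j_k} \to x^\ast \in \overline\Omega$. Uniform convergence of $\varphi^{(j)}$ to $\hat\varphi$ on $\overline\Omega$ together with continuity of $\hat\varphi$ then yields $\hat\varphi(x^\ast) = \lim_k \varphi^{(j_k)}(x_{j_k}) = y$, and by a.e. injectivity of $\hat\varphi$ (property iii)) this forces $x^\ast = \hat\varphi^{-1}(y)$. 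Hence the whole sequence converges, and continuity of $\tilde I$ gives pointwise a.e. convergence of the integrands.

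Finally, $|\tilde I\circ(\varphi^{(j)})^{-1} - \tilde I\circ\hat\varphi^{-1}|^p \le (2\|\tilde I\|_{L^\infty(\Omega)})^p$, which is integrable since $\Omega$ has finite measure, so Lebesgue's dominated convergence theorem makes the middle term vanish as $j \to \infty$. Combining the three estimates and letting $\varepsilon \to 0$ concludes the proof. The main obstacle is the a.e. pointwise convergence of the inverse deformations, since the inverses themselves are not a priori continuous; the argument above circumvents this by working with the forward maps and invoking the a.e. injectivity from property iii) rather than any regularity of $\hat\varphi^{-1}$.
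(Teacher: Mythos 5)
Your proof is correct, but it takes a genuinely different route from the paper's at the decisive step. You approximate $I$ itself by a continuous $\tilde I$ and then handle the middle term $\|\tilde I\circ(\varphi^{(j)})^{-1}-\tilde I\circ\hat\varphi^{-1}\|_{L^p}$ by proving a.e.\ pointwise convergence of the inverse deformations via compactness of $\overline\Omega$, uniform convergence of the forward maps, and the a.e.\ injectivity from property iii), followed by dominated convergence. The paper instead approximates $I\circ\hat\varphi^{-1}$ by uniformly continuous functions $I_k$ and uses the three-term splitting through $I_k\circ\hat\varphi\circ(\varphi^{(j)})^{-1}$, so that after a change of variables every term lives on the forward maps and the convergence is carried entirely by the uniform continuity of $I_k$ together with uniform convergence of $\det(D\varphi^{(j)})$; no statement about $(\varphi^{(j)})^{-1}$ as a function is ever needed. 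Your argument buys a stronger intermediate fact of independent interest (a.e.\ pointwise convergence $(\varphi^{(j)})^{-1}\to\hat\varphi^{-1}$) and is arguably more transparent, at the price of some bookkeeping you should make explicit: to have $(\varphi^{(j)})^{-1}(y)$ single-valued for \emph{all} $j$ simultaneously you must discard the countable union of the null sets $S_j$ of all deformations (plus that of $\hat\varphi$), and existence of the preimages $x_j$ uses the surjectivity $\varphi^{(j)}(\overline\Omega)=\overline\Omega$ from property i). The treatment of the two outer terms (uniform bound on $\det(D\varphi^{(j)})$ from $C^{1,\alpha}$-convergence, change of variables as in the verification of \eqref{alles_ok}) matches the paper's use of the same tools.
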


	\begin{proof}
	Since $I \circ \hat \varphi^{-1} \in L^p(\Omega)$, there exits
	a  sequence $\{ I_k\}_{k\in\NN}$ of uniformly continuous functions
	with 
	$\| I \circ \hat \varphi^{-1}- I_k \|_{L^p(\Omega)} \le \frac{1}{k}$. 
	Using the fact that $\varphi^{-1}$ maps null sets on null sets, we conclude
	\begin{align}
	&\bigl \| I\circ (\varphi^{(j)})^{-1} - I \circ \hat \varphi^{-1} \bigr\|_{L^p(\Omega)}\\
	= &\bigl \| I \circ (\varphi^{(j)})^{-1}- I_k \circ \hat \varphi \circ (\varphi^{(j)})^{-1} + I_k \circ \hat \varphi \circ (\varphi^{(j)})^{-1} - I_k + I_k - I \circ \hat \varphi^{-1}\bigr\|_{L^p(\Omega)} 
	\\
	\leq& \;
	\left(\int_{\Omega} |I - I_k \circ \hat \varphi|^p  
	\,  \det \bigl( D\varphi^{(j)} \bigr) \, \dx x \right)^{\frac{1}{p}} + \left(\int_{\Omega} \bigl|I_k \circ \hat \varphi - I_k \circ \varphi^{(j)}\bigr|^p \,  
	\det \bigl( D\varphi^{(j)} \bigr)\, \dx x \right)^{\frac{1}{p}} 
	+  \frac{1}{k}.
	\end{align}
Due to the convergence of $\varphi^{(j)}$, 
there exists a constant $C$ such that
$\det (D\varphi^{(j)}) \le C$ for all $j \in \mathbb N$. 
Thus,
	\begin{align}
	   & \bigl\| I\circ (\varphi^{(j)})^{-1} - I \circ \hat \varphi^{-1} \bigr\|_{L^p(\Omega)}\\
	\leq& \;
	 \left(\int_{\Omega} |I - I_k \circ \hat \varphi|^p  \,  \det \bigl( D\varphi^{(j)} \bigr) \dx x \right)^{\frac{1}{p}} 
	+ C \left(\int_{\Omega} \bigl|I_k \circ \hat \varphi - I_k \circ \varphi^{(j)}\bigr|^p \dx x \right)^{\frac{1}{p}} 
	+  \frac{1}{k}.
	\end{align}
The last term converges to zero as $k \rightarrow \infty$. Now fix $k \in \mathbb N$.
Since $\varphi^{(j)}$ converges uniformly to $\hat \varphi$, the uniform continuity of $I_k$ can be used 
to conclude that $I_k\circ \varphi^{(j)}$ converges uniformly to $I_k \circ \hat \varphi$. 
Then boundedness of $\Omega$ implies that the second term converges to zero as $j \to\infty$. 
For the first term the uniform continuity of $I_k$ implies that for every $\epsilon >0$ there exits $j \in \mathbb N$
large enough such that
	\begin{align}
	\left(\int_{\Omega}| I - I_k \circ \hat \varphi|^p \det \bigl( D\varphi^{(j)} \bigr) \, \dx x \right)^{\frac{1}{p}}
	&\leq 
	\left(\int_{\Omega} |I - I_k \circ \hat \varphi|^p \det (D\hat \varphi) \dx x \right)^{\frac{1}{p}} + \epsilon\\
	& = \left(\int_{\Omega} \left|I \circ \hat \varphi^{-1} - I_k \right|^p \, \dx x \right)^{\frac{1}{p}} + \epsilon \le  \frac{1}{k} + \epsilon.
	\end{align}
	This concludes the proof.
	\end{proof}

\section{Space Continuous Model} \label{sec:model_cont}
In this section, we establish our space continuous model, which takes the information I1) and I2) into account
and prove existence of minimizers, stability and convergence for vanishing noise. These three properties are necessary for a well-defined regularization method.

\subsection{Model} \label{subsec:model}
Starting with the information I1), we are interested in reconstruc\-ting a two-dimensional image  from its measurements based on the variational approach \eqref{mod:inverse}. 
In this paper, the main focus lies on the total variation semi-norm as regularizer ${\mathcal P}$.
More precisely,  recall that the space of functions of bounded variation $BV(\Omega)$ consists of those functions $I \in L_{\mathrm{loc}}^1(\Omega)$ having weak first order derivatives which are finite Radon measures.
For $I \in L^1(\Omega)$, it holds that $I \in BV(\Omega)$ if and only if 
\[
TV(I) \coloneqq \sup \left\{ \int_\Omega I \mathrm{div} (\eta) \, \dx x\colon \eta \in \bigl(C^\infty_0(\Omega) \bigr)^n, \, |\eta| \le 1 \right\} < +\infty.
\]
The space $BV(\Omega)$ becomes a Banach space with the norm
$\|I\|_{BV} \coloneqq \|I\|_{L^1(\Omega)} + TV(I)$.
For $\Omega \subset \mathbb R^2$, i.e.~$n=2$, the space $BV(\Omega)$ can be  continuously embedded into $L^2(\Omega)$, see \cite[Theorem~3.47]{AFP2000}. 
Therefore, we can define
	\begin{equation} \label{BV}
	{\mathcal P} (I) \coloneqq 
	\left\{
	\begin{array}{ll}
	TV(I)& \mathrm{for} \; I \in BV(\Omega),\\
	+\infty & \mathrm{for} \; I \in L^2(\Omega)\backslash BV(\Omega).
	\end{array}
	\right.
	\end{equation}
	It is well-known that ${\mathcal P}$ in \eqref{BV} is a proper, convex and lower semi-continuous (lsc) functional on $L^2(\Omega)$,
	see \cite[Proposition 10.8]{SGGHL09}.		

Let $A\colon L^2(\Omega) \rightarrow {\mathcal Y}$ be a continuous linear operator into a Hilbert space ${\mathcal Y}$
which does not vanish on constant functions
and $B \in {\mathcal Y}$.
In case of the Radon transform, it holds ${\mathcal Y} =  L^2(\mathbb S^1,(-1,1))$. 
Then, we define the variational reconstruction model
	\begin{equation}\label{ROF}
	{\mathcal E}(I;B) \coloneqq \frac12 \|A I - B\|^2_{\mathcal Y} + \alpha TV (I), \quad \alpha > 0.
	\end{equation}
Note that ${\mathcal E}(I;B)$ is jointly weakly lsc in $I$ and $B$.
	
Having a reference image $R \in	L^2(\Omega)$ available, we want to add information I2) to the model.
To this end, let $W\colon \mathbb R^{2,2} \rightarrow \mathbb R_{\ge 0}$ be a lsc mapping and $\nu >0$, $m > 2$. 
Throughout the paper, it is assumed that $K \geq 1$ is an integer. 
For a sequence ${\mathbf I} \coloneqq (I_0,\ldots,I_{K-1})$ of images in $L^2(\Omega)$ and 
a sequence of admissible deformations $\boldsymbol{\varphi} \coloneqq (\varphi_0,\ldots,\varphi_{K-1})$ we consider
the time discrete geodesic path model
\begin{equation} \label{path}
\mathcal{F}({\mathbf I}, \boldsymbol{\varphi}) 
\coloneqq 
\sum_{k = 0}^{K-1}\int_\Omega W(D\varphi_k)+\nu \bigl\lvert D^m\varphi_k\bigr\rvert^2
	+ \bigl| I_{k} \circ \varphi_{k}^{-1}  - I_{k+1}\bigr|^2 \dx x,
\end{equation}
where $I_K \coloneqq R \in L^2(\Omega)$ is a 
given reference image.
Then, our whole model reads as
\begin{align} \label{path_I_phi}
	{\mathcal J} ({\mathbf I}, \boldsymbol{\varphi}) 
	&\coloneqq
	\mathcal{E}(I_0;B) + \beta \mathcal{F}({\mathbf I}, \boldsymbol{\varphi}) \quad \mbox{subject to} \quad I_K = R,
\end{align}
where $\beta > 0$. We call this model TDM-INV model referring to 'time discrete morphing - inverse' problems.

\begin{remark}\label{lin_elastic_pot}
The linearized elastic potential is our choice for $W$ in \eqref{path}.
More precisely, rewriting the deformation as 
$\varphi(x) = x+v(x)$ 
and introducing the notation of the (Cauchy) strain tensor of the displacement vector field 
	$v = (v_1,v_2)^\tT \colon \Omega\mapsto\RR^2$ as
	\[
	Dv_{\operatorname{sym}} \coloneqq 
	\begin{pmatrix} 
	\partial_x v_1 & \frac12 (\partial_y v_1 + \partial_x v_2)\\
	\frac12 (\partial_y v_1 + \partial_x v_2) & \partial_y v_2
	\end{pmatrix},
	\]
	we apply
	\begin{align}
	\mathcal{S}(v) &\coloneqq \int_{\Omega} \mu \trace \left( Dv_{\operatorname{sym}}^\tT \, Dv_{\operatorname{sym}} \right)
	+
	\frac{\lambda}{2}\trace \left( Dv_{\operatorname{sym}} \right)^2 \, \dx x, \quad \nu > 0 \label{eq:pot}.
	\end{align}
	Note that the linearized elastic potential is a usual regularizer in the context of registration, see \cite{HM06,Mod2004,PPS17}.
\end{remark}
\subsection{Existence, Stability and Convergence} \label{subsec:ex}
In this section, we prove that there exists a minimizer of $\mathcal{J}$ in \eqref{path_I_phi}. 
Based on this, we show 
its  stability with respect to the input data $B$
and 
the convergence of an image sequence $\{I_0^{(j)} \}_{j \in \mathbb N}$ obtained from minimizing
the functionals with input data $B_j$ fulfilling  $\| A I_0 - B_j\|_{\mathcal Y}^2 \le \delta_j$ 
for a zero sequence $\{ \delta_j \}_{j \in \mathbb N}$
and corresponding parameters $\alpha_j,\beta_j$  decaying faster than $\delta_j$ to $I_0$.

The existence proof is the hardest part.
As usual for functionals in two variables it is based on three pillars: 
First it is shown that a minimizer exists if one of the variables is fixed.
In a second step the results are merged to get the overall existence.

Fixing the image sequence $\mathbf{I}$ leads to the solution of single registration problems.
The proof of Lemma \ref{lem:ex:phi} follows similar ideas as in \cite{BER15,NPS17}.
However, since the setting in those papers is different, 
we prefer to carefully follow the lines and make the necessary modifications
to make the paper self-contained.
Fixing $\varphi$, it is necessary to deal with the additional term ${\mathcal E}$
and the proof of Lemma \ref{lem:uni:seq} is different from those in \cite{BER15,NPS17},
in particular it relies on nested weighted $L_2$ spaces.
Except for the first step, the existence proof of Theorem \ref{main}
requires completely new estimates compared to \cite{BER15,NPS17}.

To begin with, we fix an image sequence 
${\mathbf I} \in (L^2(\Omega))^{K}$ 
and show that $J({\mathbf I}, \cdot)$ 
has a minimizer $\boldsymbol{\varphi} \in \diffeo^K$. 
Then, the consideration can be restricted to $\mathcal{F}({\mathbf I}, \cdot)$ and it suffices to prove that each of the summands
\begin{equation}
	{\cal R} (\varphi_k;I_{k},I_{k+1}) \coloneqq \int_\Omega W(D\varphi_k) + \nu \bigl\lvert D^m\varphi_k \bigr\rvert^2\dx x,
	+ \bigl| I_{k} \circ \varphi_{k}^{-1}- I_{k+1}\bigr|^2 \dx x,
\end{equation}
for $k=0,\ldots,K-1$, has a minimizer in $\diffeo$.
	
\begin{lemma}\label{lem:ex:phi}
	Let $W\colon\RR^{n, n}\to\RR_{\ge 0}$ be a lsc mapping with the property
	\begin{equation} \label{prop3}
	W(M)=\infty \quad \mathrm{if} \quad \det M \le 0.
	\end{equation}
	Further, let $T,R\in L^2(\Omega)$ be given.
	Then there exists  a minimizer $\hat \varphi\in\diffeo$ of
	\begin{equation} 
		 \mathcal{R}(\varphi;T,R)  \coloneqq
		\int_{\Omega}W(D \varphi)+\nu \bigl\lvert D^m \varphi \bigr\rvert^2 + \left|T \circ\varphi^{-1} - R\right|^2  \dx x
	\end{equation}
	over all $\varphi \in \diffeo$.
\end{lemma}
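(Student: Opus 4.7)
The plan is a standard direct method of the calculus of variations, adapted to the admissible set $\diffeo$. Denote $m_* \coloneqq \inf_{\varphi \in \diffeo} \mathcal{R}(\varphi;T,R)$. Since $\varphi_0(x) \coloneqq x$ lies in $\diffeo$ and gives a finite value (because $T-R \in L^2(\Omega)$ and $W(I) < \infty$), we have $m_* < \infty$, and $m_* \ge 0$ because all three integrands are non-negative. Pick a minimizing sequence $\{\varphi^{(j)}\}_{j \in \NN} \subset \diffeo$ with $\mathcal{R}(\varphi^{(j)};T,R) \to m_*$.

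Coercivity comes from the regularizer $\nu |D^m \varphi|^2$. Since $\varphi^{(j)}(x) = x$ on $\partial \Omega$ for all $j$, the difference $\varphi^{(j)} - \mathrm{id}$ has vanishing trace, and a Poincaré-Friedrichs type inequality in $W^{m,2}(\Omega)^n$ bounds $\|\varphi^{(j)} - \mathrm{id}\|_{W^{m,2}(\Omega)^n}$ uniformly in $j$, hence so is $\|\varphi^{(j)}\|_{W^{m,2}(\Omega)^n}$. Passing to a subsequence (not relabeled), there exists $\hat\varphi \in W^{m,2}(\Omega)^n$ with $\varphi^{(j)} \rightharpoonup \hat \varphi$ in $W^{m,2}(\Omega)^n$. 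Since $m > 1 + \tfrac{n}{2}$, the compact embedding $W^{m,2}(\Omega) \hookrightarrow C^{1,\alpha}(\overline\Omega)$ for some $\alpha \in (0, m - 1 - \tfrac{n}{2})$ yields strong convergence $\varphi^{(j)} \to \hat\varphi$ in $(C^{1,\alpha}(\overline\Omega))^n$; in particular $D\varphi^{(j)} \to D\hat\varphi$ uniformly on $\overline \Omega$ and $\det(D\varphi^{(j)}) \to \det(D\hat\varphi)$ pointwise.

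The main obstacle is to verify that $\hat\varphi \in \diffeo$. The boundary condition $\hat\varphi(x) = x$ on $\partial\Omega$ is immediate from uniform convergence. The delicate part is that $\det(D\hat\varphi) > 0$ a.e. For this we invoke the lsc of $W$ together with assumption \eqref{prop3}: Fatou's lemma applied to $W(D\varphi^{(j)}) \ge 0$ with $W(D\varphi^{(j)}) \to W(D\hat\varphi)$ pointwise almost everywhere (using lsc of $W$ and uniform convergence of $D\varphi^{(j)}$) yields
\[
\int_\Omega W(D\hat\varphi) \, \dx x \le \liminf_{j \to \infty} \int_\Omega W(D\varphi^{(j)}) \, \dx x \le m_* < \infty.
\]
Consequently $W(D\hat\varphi) < \infty$ a.e., and \eqref{prop3} forces $\det(D\hat\varphi) > 0$ a.e., so $\hat\varphi \in \diffeo$.

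It remains to check lower semi-continuity of $\mathcal{R}(\cdot;T,R)$ along the subsequence. The first summand was just handled by Fatou. The second is the $W^{m,2}$ seminorm squared (up to the constant $\nu$), which is weakly lsc on $W^{m,2}(\Omega)^n$. For the fidelity term, Lemma \ref{stet_norm} applied to $I = T$ and the sequence $\varphi^{(j)} \to \hat\varphi$ in $(C^{1,\alpha}(\overline\Omega))^n$ gives $T \circ (\varphi^{(j)})^{-1} \to T \circ \hat\varphi^{-1}$ in $L^2(\Omega)$, so $\|T \circ (\varphi^{(j)})^{-1} - R\|_{L^2(\Omega)}^2 \to \|T \circ \hat\varphi^{-1} - R\|_{L^2(\Omega)}^2$ (continuity, not just lsc). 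Summing,
\[
m_* \le \mathcal{R}(\hat\varphi;T,R) \le \liminf_{j \to \infty} \mathcal{R}(\varphi^{(j)};T,R) = m_*,
\]
which shows that $\hat\varphi$ is a minimizer. The subtle point throughout is that $\det(D\hat\varphi)$ might in principle degenerate in the limit; our use of the blow-up assumption \eqref{prop3} on $W$ is precisely what prevents this.
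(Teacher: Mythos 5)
Your overall architecture (direct method, Fatou's lemma combined with \eqref{prop3} to keep $\det(D\hat\varphi)>0$ a.e., weak lower semicontinuity of the $D^m$ term, and Lemma~\ref{stet_norm} for the fidelity term) coincides with the paper's proof, but there is a genuine gap in the coercivity step. You bound $\lVert \varphi^{(j)}-\mathrm{id}\rVert_{(W^{m,2}(\Omega))^n}$ by a ``Poincar\'e--Friedrichs type inequality'' using only that $\varphi^{(j)}-\mathrm{id}$ has vanishing trace. Such an inequality is false for general admissible domains once $m\ge 3$: only the zeroth-order trace vanishes, while the kernel of $D^m$ consists of all polynomials of degree $<m$, and these may vanish on $\partial\Omega$. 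Concretely, for $\Omega$ the unit disk in $\RR^2$ and $m=3$ (the relevant integer case of $m>1+\tfrac{n}{2}$), the function $u(x)=c\,(1-\lvert x\rvert^2)$ has zero boundary trace and $D^3u\equiv 0$, so no bound of the form $\lVert u\rVert_{W^{3,2}(\Omega)}\le C\lVert D^3u\rVert_{L^2(\Omega)}$ can hold; the classical Friedrichs inequality would require all derivatives up to order $m-1$ to vanish on $\partial\Omega$, which is not part of the definition of $\diffeo$. The paper closes this step differently: by Ball's result, every $\varphi\in\diffeo$ satisfies $\varphi(\overline\Omega)=\overline\Omega$, which yields the uniform bounds \eqref{sieben} on $\lVert\varphi^{(j)}\rVert_{(L^2(\Omega))^n}$, and the Gagliardo--Nirenberg inequality (Remark~\ref{th:gnr}) then controls the intermediate derivatives from $\lVert D^m\varphi^{(j)}\rVert_{L^2(\Omega)}$ and $\lVert\varphi^{(j)}\rVert_{L^2(\Omega)}$, giving the uniform $W^{m,2}$ bound. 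Substituting this argument repairs your proof; everything after that point goes through.

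Two smaller imprecisions are worth noting but are not fatal. First, finiteness of the infimum is not guaranteed by the hypotheses, since $W$ evaluated at the identity matrix need not be finite; however, if the infimum is $+\infty$ the assertion is vacuous, so this only deserves a remark. Second, lower semicontinuity of $W$ together with the uniform convergence $D\varphi^{(j)}\to D\hat\varphi$ gives only $\liminf_{j\to\infty} W(D\varphi^{(j)})(x)\ge W(D\hat\varphi)(x)$, not the pointwise convergence of $W(D\varphi^{(j)})$ that you assert; this liminf inequality is exactly what Fatou's lemma needs, so your displayed estimate (and the conclusion $\det(D\hat\varphi)>0$ a.e.) stands once the justification is phrased correctly.
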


\begin{proof} 
	1. Let $\{ \varphi^{(j)} \}_{j\in\mathbb{N}}$, $\varphi^{(j)} \in \diffeo$, be a minimizing sequence
	of $\mathcal{R}$.  
	Then  it holds that $\mathcal{R}( \varphi^{(j)};T,R ) \leq C$ for all $j \in \mathbb N$.
	This implies that $\{ \varphi^{(j)}\}_{j\in\mathbb{N}}$ has uniformly bounded $(W^{m,2}(\Omega))^n$ semi-norm,
	and by \eqref{sieben} the sequence is also uniformly bounded in \\
	$(L^2(\Omega))^n$.	
	Now we apply the Gagliardo-Nirenberg inequality, see Remark~\ref{th:gnr}, which states that for all $0 \le i < m$ it holds
	\begin{equation}
	\bigl\lVert D^i \varphi_\nu^{(j)}\bigr\rVert_{L^2(\Omega)}
	\le C_1\bigl\lVert D^m \varphi_\nu^{(j)}\bigr\rVert_{L^2(\Omega)}
	+C_2\bigl\lVert \varphi_\nu^{(j)}\bigr\rVert_{L^2(\Omega)}, \quad \nu =1,\ldots,n.
	\end{equation}
	All terms on the right-hand side are uniformly bounded.
	Hence, the $(W^{m,2}(\Omega))^n$ norm of $\{ \varphi^{(j)}\}_{j\in\mathbb{N}}$ is uniformly bounded.
	Since $W^{m,2}(\Omega)$ is reflexive, there exists a subsequence 
	which converges weakly to some function $\hat \varphi$ in $(W^{m,2}(\Omega))^n$. 
	By the compact embedding $W^{m,2} (\Omega)\hookrightarrow C^{1,\alpha}(\overline{\Omega})$, 
	$\alpha \in (0,m-1-\frac{n}{2})$,
	this subsequence, which is again denoted by $\{ \varphi^{(j)}\}_{j\in\mathbb{N}}$, 
	converges strongly to $\hat \varphi$ in $( C^{1,\alpha}(\overline{\Omega}))^n$
	and hence $D \varphi^{(j)}$ converges uniformly to $D \hat \varphi$.
	
	2. Next we show that $\hat \varphi$ is in the set ${\cal A}$.
	Since $W$ is lsc, we conclude
	\[
	\liminf_{j\to\infty} W\bigl(D \varphi^{(j)}\bigr)(x) \ge W(D \hat \varphi)(x)
	\]
	for all $x \in \Omega$ and since $W$ is nonnegative Fatou's lemma implies
	\[
	\int_{\Omega} W (D \hat \varphi)\dx x \le \liminf_{j \rightarrow \infty} \int_{\Omega} W\bigl(D\varphi^{(j)} \bigr)\dx x \le C.
	\]
	By incorporating \eqref{prop3} this implies $\det( D \hat \varphi) > 0$ a.e.
	Further, the boundary condition is fulfilled so that $\hat \varphi\in\diffeo$.  

	3. It remains to show that $\hat \varphi$ is a minimizer of $\mathcal{R}(\varphi;T,R)$.
	By Lemma \ref{stet_norm}, it holds
	$\| T \circ (\varphi^{(j)})^{-1} - T \circ \hat\varphi^{-1}\|_{L^2(\Omega)} \to 0$ as $j \to \infty$, 
	so that by the continuity of the norm
	\[ 
	\big \| T\circ \hat\varphi^{-1} - R \big\|_{L^2(\Omega)} 
	= \lim_{j \to \infty} 
	\big\| T\circ (\varphi^{(j)})^{-1}- R\big\|_{L^2(\Omega)}.
	\]
	This together with the previous steps of the proof implies 
	that  the three summands in $\mathcal{R}$ are (weakly) lsc.
	Hence, we obtain
	\begin{align}
	\mathcal{R}(\hat \varphi;T,R) 
	&\le 
	\liminf_{j\to\infty}  \int_{\Omega}W\bigl(D \varphi^{(j)}\bigr) + \nu \bigl\lvert D^m \varphi^{(j)} \bigr \rvert^2
	+
	\left| T\circ (\varphi^{(j)})^{-1}  - R \right|^2 \dx x\\
	&= 
	\inf_{\varphi\in\diffeo}\mathcal{R}(\varphi;T,R),
	\end{align}
	which proves the claim.
\end{proof}

Next, we fix a sequence of  mappings $\boldsymbol{\varphi}\in \diffeo^K$ 
and ask for a minimizer of ${\mathcal J }(\cdot, \boldsymbol{\varphi})$.

\begin{lemma}\label{lem:uni:seq}
Let $A\colon L^2(\Omega) \rightarrow {\mathcal Y}$ be a continuous linear operator into a Hilbert space ${\mathcal Y}$
which does not vanish on constant functions, $B \in {\mathcal Y}$ and $R \in L^2(\Omega)$.
For fixed $\boldsymbol{\varphi} \in \diffeo^K$, 
there exists a unique image sequence 
${\mathbf I} \in (L^2(\Omega))^K$
which minimizes ${\mathcal J }(\cdot, \boldsymbol{\varphi})$. 
\end{lemma}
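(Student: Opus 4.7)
The plan is to apply the direct method of the calculus of variations for existence and to exploit strict convexity of the squared $L^2$-norm for uniqueness. Since $\boldsymbol{\varphi}$ is fixed, the terms $W(D\varphi_k)+\nu|D^m\varphi_k|^2$ are constants and can be dropped. What remains is a sum of convex, weakly lower semicontinuous functionals in $\mathbf{I}$: the data term $\tfrac12\|AI_0-B\|_{\mathcal{Y}}^2$, the regularizer $\alpha\,TV(I_0)$, and the coupling terms $\beta\|I_k\circ\varphi_k^{-1}-I_{k+1}\|_{L^2(\Omega)}^2$, subject to $I_K=R$.

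For existence I would take a minimizing sequence $\{\mathbf{I}^{(j)}\}\subset(L^2(\Omega))^K$ and first derive a priori bounds. The critical step is coercivity in $I_0^{(j)}$: from the bound on $TV(I_0^{(j)})$ and the Poincar\'e inequality in $BV$ one controls the oscillation $\|I_0^{(j)}-\overline{I_0^{(j)}}\|_{L^2}$; combining this with the bound on $\|AI_0^{(j)}-B\|_{\mathcal{Y}}$ and the assumption that $A$ does not vanish on constants bounds the mean $\overline{I_0^{(j)}}$, and hence $\|I_0^{(j)}\|_{L^2}$. Once $I_0^{(j)}$ is bounded in $L^2$, the change of variables formula from property~ii) yields $\|I_0^{(j)}\circ\varphi_0^{-1}\|_{L^2}\le C\|I_0^{(j)}\|_{L^2}$, since $\det(D\varphi_0)$ is bounded on $\overline{\Omega}$; the triangle inequality applied to the $k=0$ coupling term then bounds $I_1^{(j)}$, and one propagates forward through the chain to bound each $I_k^{(j)}$ in $L^2(\Omega)$.

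Next I would extract subsequential weak limits $I_k^{(j)}\rightharpoonup\hat{I}_k$ in $L^2(\Omega)$. The main observation is that, for fixed $\varphi_k\in\diffeo$, the pull-back $I\mapsto I\circ\varphi_k^{-1}$ is a bounded linear operator on $L^2(\Omega)$ by the same change of variables, hence weakly continuous. Therefore $I_k^{(j)}\circ\varphi_k^{-1}-I_{k+1}^{(j)}\rightharpoonup\hat{I}_k\circ\varphi_k^{-1}-\hat{I}_{k+1}$, and the weak lower semicontinuity of $\|\cdot\|_{L^2}^2$ handles each coupling term. Since $\|AI_0-B\|_{\mathcal{Y}}^2$ is weakly lsc in $I_0$ (via continuity of $A$) and $TV$ is weakly lsc on $L^2$ (recalled earlier in the paper), one obtains $\mathcal{J}(\hat{\mathbf{I}},\boldsymbol{\varphi})\le\liminf_j\mathcal{J}(\mathbf{I}^{(j)},\boldsymbol{\varphi})$, so $\hat{\mathbf{I}}$ is a minimizer.

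For uniqueness I would take two minimizers $\mathbf{I}^{(1)},\mathbf{I}^{(2)}$ and set $J_k\coloneqq I_k^{(1)}-I_k^{(2)}$, with $J_K=0$. Convexity of $\mathcal{J}(\cdot,\boldsymbol{\varphi})$ implies that the midpoint is again a minimizer, and strict convexity of $\|\cdot\|_{L^2}^2$ then forces the equalities $J_k\circ\varphi_k^{-1}=J_{k+1}$ a.e.\ on $\Omega$ for every $k$. Starting from $J_K=0$ and using the change of variables formula together with $\det(D\varphi_k)>0$ a.e.\ and Lemma~\ref{lem:null sets}, a backward induction yields $J_k=0$ a.e.\ for all $k$. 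The main obstacle is the $L^2$-bound on $I_0^{(j)}$ in the existence step: without the standing hypothesis that $A$ does not vanish on constants, a TV penalty plus a generic data fidelity controls $I_0$ only modulo constants, so coercivity—and with it the whole chain of bounds on $I_1,\ldots,I_{K-1}$—would fail.
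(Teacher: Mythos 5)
Your proof is correct, but it takes a genuinely different route from the paper. The paper first substitutes $F_k \coloneqq I_k\circ\psi_k$ with $\psi_k \coloneqq \varphi_{k-1}\circ\dots\circ\varphi_0$ and $w_k \coloneqq \det(D\psi_k)$, which turns every coupling term into a plain weighted difference $\|F_k-F_{k-1}\|^2_{L^2_{w_k}}$ on a nested scale of weighted $L^2$ spaces; the transformed functional $\tilde J$ is then visibly strictly convex (with $F_K$ fixed), weakly lsc, and coercive -- the coercivity of $\mathcal E$ is simply cited from the literature and propagated through the nesting -- so existence and uniqueness follow in one stroke, and the minimizer is pulled back via $I_k = F_k\circ\psi_k^{-1}$. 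You instead stay in the $\mathbf I$ variables: you observe that $I\mapsto I\circ\varphi_k^{-1}$ is a bounded linear, hence weakly continuous, operator on $L^2(\Omega)$, run the direct method with a coercivity argument that re-derives the coercivity of $\mathcal E$ by hand (BV--Poincar\'e plus $A\mathbf 1\neq 0$ to control the mean) and propagates the $L^2$ bound forward through the chain using boundedness of $\det(D\varphi_k)$, and then obtain uniqueness not from a global strict convexity statement but from the midpoint argument forcing $J_k\circ\varphi_k^{-1}=J_{k+1}$ a.e.\ and a backward cascade from $J_K=0$ using $\det(D\varphi_k)>0$ a.e. Both arguments are sound; the paper's substitution has the added benefit that the transformed, explicitly tridiagonal structure is reused later (Corollary~\ref{cor:tridiag} and the alternating algorithm), whereas your version is more self-contained and avoids the weighted-space machinery altogether.
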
	
		
\begin{proof}
	We prove lower semi-continuity, coercivity and strict convexity of the functional.
	Neglecting the constant terms and by changing the indexing of the sum it remains to consider
	\begin{align} \label{path_I}
	J (\vec I) 
	\coloneqq 
	\beta \sum_{k = 1}^{K} \int_{\Omega} \bigl| I_{k-1} \circ \varphi_{k-1}^{-1} - I_{k} \bigr|^2 \dx x
	+
	\mathcal{E}(I_0;B) 
	\quad \mbox{subject to} \quad I_K = R.
	\end{align}
	Setting 
\begin{equation}\label{eq:psi}
\begin{split}
 \psi_0(y) &\coloneqq y,\\
 \psi_{k}(y) &\coloneqq \varphi_{k-1} \circ \psi_{k-1} (y) = \varphi_{k-1} \circ \ldots \circ \varphi_{0}(y), \quad k=1,\ldots,K,
 \end{split}
\end{equation}
and substituting $x \coloneqq \psi_k (y)$
in the $k$-th summand of \eqref{path_I}, the functional transforms to
\begin{align*} 
J ({\mathbf  I})
&= \beta \sum_{k=1}^K \int_{\Omega} | I_k \circ \psi_{k} - I_{k-1}\circ\psi_{k-1}|^2 \, \det \left( D \psi_k \right) \, \dx y 
+ {\mathcal E}(I_0;B).
\end{align*}
Using ${\mathbf F} \coloneqq (F_0,\ldots,F_{K-1})$, where
$F_0 \coloneqq I_0$,
$F_k \coloneqq I_k \circ \psi_{k}$, 
and 
$w_0(x) \coloneqq 1$, $w_k(x) \coloneqq  \det\left( D \psi_k(x) \right)$, we are concerned with the minimization of
\begin{align} \label{eq:aha}
\begin{split}
\tilde J ({\mathbf  F}) \coloneqq & \; \beta \sum_{k=1}^K \int_{\Omega}  \left| F_k - F_{k-1}\right|^2 \, w_k \, \dx  x 
+ {\mathcal E}(F_0;B)\\
&\mbox{subject to} \quad F_K = R\circ \psi_K.
\end{split}
\end{align}
Note that by $0< w_k \le C$ a.e.~and $w_{k} = w_{k-1} \det (D \varphi_{k-1}\circ \psi_{k-1})$,
the weighted $L^2$ spaces are nested
\begin{equation} \label{schachtel}
L^2(\Omega) = L^2_{w_0}(\Omega) \subseteq L^2_{w_1}(\Omega) \subseteq \ldots \subseteq L^2_{w_K}(\Omega),
\end{equation}
in particular $F_K \in L^2_{w_K}(\Omega)$ if $R \in L^2(\Omega)$.
A minimizer must fulfill $F_0 \in BV(\Omega) \subset L^2(\Omega)$, 
and by
successively considering the integrals in \eqref{eq:aha} further
$F_k \in L^2_{w_k}(\Omega)$.
In the following, we set $\beta \coloneqq 1$ to simplify the notation.
Since the function $g\colon \mathbb R^{K} \rightarrow \mathbb R$,
\[
g(f_0,\ldots,f_{K-1}) \coloneqq \sum_{k=1}^K (f_k - f_{k-1})^2 w_k
\]
with $w_k >0$ and $f_K$ fixed, is strictly convex, the sum of the integrals in $\tilde J$ is strictly convex.
Clearly, this sum can be rewritten as
$\sum_{k=1}^{K-1} \|F_k - F_{k-1}\|_{L^2_{w_k}(\Omega)} + \|R\circ \psi_K - F_{K-1}\|_{L^2_{w_K}(\Omega)}$
and is continuous.
Since ${\mathcal E}(F_0;B)$ is proper, convex and lsc, the same holds true for $\tilde J$ 
over $L^2_{w_0}(\Omega) \times \ldots \times L^2_{w_{K-1}}(\Omega)$. Thus, $\tilde J$ is also weakly lsc \cite[Lemma~10.4]{SGGHL09}.

Next we show that $\tilde J$ is coercive.
Assume conversely that $\sum_{k=0}^{K-1} \|F_k^{(j)}\|_{L^2_{w_k}(\Omega)} \rightarrow \infty$
but $\tilde J(\mathbf F^{(j)})$ is bounded. 
By the assumptions on $A$ it holds that ${\mathcal E}(F_0;B)$ is coercive, see \cite[Theorem 6.115]{BL2011}.
Thus, $\|F_0^{(j)}\|_{L^2(\Omega)}$ is bounded and by \eqref{schachtel} also $\|F_0^{(j)}\|_{L^2_{w_1}(\Omega)}$ is bounded.
Considering successively the integrals in \eqref{eq:aha} we obtain that $\|F_k^{(j)}\|_{L^2_{w_k}(\Omega)}$, $k=1,\ldots,K-1$
is bounded which contradicts our assumption.

Thus, $\tilde J$ is coercive and since it is weakly lsc and strictly convex, the functional has a unique minimizer $\mathbf F$.
By definition of $\mathbf F$ the unique minimizer of $J$ is given by $\mathbf I$ with $I_k = F_k \circ \psi_k^{-1} \in L^2(\Omega)$.
\end{proof}

For our computations, the following corollary on the minimizer of
$\tilde J$ in \eqref{eq:aha} with fixed $F_0$ will be useful.

\begin{corollary}\label{cor:tridiag}
Let $K \ge 2$ be an integer.
Further, let $w_k \in C^{0,\alpha}(\overline \Omega)$, $k=1,\ldots, K$ fulfill $w_k > 0$ a.e.~on $\Omega$ and 
$\frac{w_{k+1}}{w_{k}} \le C$, $k=1,\ldots,K-1$.
For given $F_0 \in L^2(\Omega)$ and $F_K \in L^2_{w_K}(\Omega)$,
the  solution of
\[
\argmin_{F_k \in L^2_{w_k}(\Omega)} \sum_{k=1}^{K} \int_{\Omega}  \left| F_k - F_{k-1} \right|^2 \, w_k \, \dx  x 
\]
is given by
\begin{equation} \label{aha}
F_k = t_k F_0 + (1-t_k) F_K, \quad t_k \coloneqq \frac{\sum_{i=1}^k w_i^{-1}}{\sum_{i=1}^K w_i^{-1} }.
\end{equation}
\end{corollary}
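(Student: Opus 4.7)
The plan is to observe that the functional $(F_1,\ldots,F_{K-1}) \mapsto \sum_{k=1}^K \int_\Omega |F_k - F_{k-1}|^2 w_k \, dx$ decouples over the points $x \in \Omega$: no spatial derivatives appear and the integrand at $x$ only involves the values $F_k(x)$. Hence the problem reduces to the pointwise minimization, at each $x$ with $w_k(x) > 0$ for all $k$, of the strictly convex quadratic $g(f_1,\ldots,f_{K-1}) := \sum_{k=1}^K w_k(x)(f_k - f_{k-1})^2$ with $f_0 := F_0(x)$ and $f_K := F_K(x)$ prescribed.

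For this pointwise problem I would write down the first-order optimality conditions. Differentiating with respect to $f_k$ for $k = 1, \ldots, K-1$ gives $w_k(f_k - f_{k-1}) = w_{k+1}(f_{k+1} - f_k)$, so the quantity $w_k(f_k - f_{k-1})$ is independent of $k$. Calling this common value $c$, one obtains $f_k - f_{k-1} = c/w_k$; telescoping from $0$ to $K$ fixes $c = (f_K - f_0)/\sum_{i=1}^K w_i^{-1}$. Substituting back and collecting terms expresses $f_k$ as the convex combination of $f_0$ and $f_K$ with weight $t_k$ as defined in the statement, which after passing back to functions of $x$ yields the claimed formula \eqref{aha}.

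The only nontrivial step is then to verify that the pointwise minimizer, viewed as a function of $x$, actually belongs to $L^2_{w_k}(\Omega)$ and not merely to some formal space. Measurability is immediate from continuity of the weights $w_i \in C^{0,\alpha}(\overline\Omega)$. For integrability of $t_k F_K$ against $w_k$, I plan to use the crude bound $t_k \leq w_K \sum_{i=1}^k w_i^{-1}$ together with an iterated application of $w_{i+1}/w_i \leq C$ to obtain $t_k w_k \leq C' w_K$ for some constant $C'$, which reduces the estimate to the assumption $F_K \in L^2_{w_K}(\Omega)$. For the term $(1-t_k)F_0$, boundedness of $w_k$ on $\overline\Omega$ combined with $F_0 \in L^2(\Omega)$ is enough. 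Strict convexity of $g$ at each $x$ gives uniqueness. I expect this integrability check, which is where the hypothesis $w_{k+1}/w_k \leq C$ enters in an essential way, to be the only technical point worth writing out carefully.
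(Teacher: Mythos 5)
Your overall route is the same as the paper's: there, too, the proof sets the first variation to zero to get $w_k(F_k-F_{k-1}) + w_{k+1}(F_k - F_{k+1}) = 0$ a.e., notes that the resulting tridiagonal system with right-hand side $(w_1F_0,0,\dots,0,w_KF_K)^\tT$ is uniquely solvable (irreducible diagonal dominance), and leaves the explicit solution as a ``straightforward computation''. Your pointwise decoupling, constant-flux/telescoping solve and strict-convexity uniqueness are exactly that computation made explicit, and the integrability check (using that iterating $w_{i+1}/w_i\le C$ yields $\sum_{i\le k}w_i^{-1}\le C''w_k^{-1}$, hence $t_k w_k\le C'w_K$) is a sensible addition which the paper leaves implicit.

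However, the final identification with \eqref{aha} does not hold as you state it. Carrying out your own telescoping, $F_k-F_{k-1}=c/w_k$ with $c=(F_K-F_0)/\sum_{i=1}^K w_i^{-1}$ gives
\[
F_k \;=\; F_0 + t_k\,(F_K-F_0) \;=\; (1-t_k)\,F_0 + t_k\,F_K,
\]
i.e.\ the weight $t_k$ multiplies $F_K$, not $F_0$ --- the opposite of \eqref{aha} as printed. The swapped form is the correct one: for $t_K=1$ it returns $F_K$; for constant weights it is the linear interpolation from $F_0$ to $F_K$; and it solves the tridiagonal system, whereas the printed formula already fails for $K=2$, $w_1=1$, $w_2=2$ (the minimizer is $F_1=\tfrac13F_0+\tfrac23F_2$, not $\tfrac23F_0+\tfrac13F_2$). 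So either you swapped the endpoints when ``collecting terms'', or you silently matched a misprint in \eqref{aha}; note that your own integrability estimate, which controls $t_kF_K$ in $L^2_{w_k}$ via $t_kw_k\lesssim w_K$, is precisely the one adapted to the corrected formula in which $F_K$ carries the coefficient $t_k$. State the corrected formula (equivalently, redefine $t_k$ with numerator $\sum_{i=k+1}^K w_i^{-1}$), and the rest of your argument goes through.
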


\begin{proof}
Setting the first derivative of the functional to zero we obtain a.e.~on $\Omega$,
\[
w_k (F_k - F_{k-1}) + w_{k+1} (F_k - F_{k+1}) = 0, \quad k=1,\ldots,K-1.
\]
This can be rewritten as linear system of equations
\[
\mathrm{tridiag} (-w_k,w_k + w_{k+1},-w_{k+1})_{k=1}^{K-1} (F_1,\ldots,F_{K-1}) ^\tT =
(w_1 F_0,0,\ldots,0, w_K F_{K} )^\tT.
\]
Since the tridiagonal matrix is irreducible diagonal dominant, the system has a unique solution.
Straightforward computation shows that the solution is given by \eqref{aha}.
\end{proof}

Now we can prove the three main results of this section, beginning with existence of minimizers.
	
\begin{theorem}[Existence]\label{main}
		Let $R\in L^2(\Omega)$ and $B \in \mathcal{Y}$ . 
		Then there exists $(\hat{\mathbf I},\hat{\boldsymbol{\varphi}} ) \in L^2(\Omega)^{K} \times \diffeo^K$ 
		minimizing $\mathcal{J}$.	
\end{theorem}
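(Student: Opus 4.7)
The plan is to carry out the standard direct method on a minimizing sequence, combining the two one-variable existence results (Lemma \ref{lem:ex:phi} and Lemma \ref{lem:uni:seq}) with a joint compactness argument. Let $\{(\mathbf{I}^{(j)},\boldsymbol{\varphi}^{(j)})\}_{j \in \mathbb N}$ be a minimizing sequence, so in particular $\mathcal{J}(\mathbf{I}^{(j)},\boldsymbol{\varphi}^{(j)}) \le C$ for some $C < \infty$.

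First, I extract limits for the deformations exactly as in Step 1 of Lemma \ref{lem:ex:phi}: the bound $\mathcal{J}(\mathbf{I}^{(j)},\boldsymbol{\varphi}^{(j)}) \le C$ gives uniform control of $\|D^m \varphi_k^{(j)}\|_{L^2}$, and combined with the boundary condition and Gagliardo-Nirenberg this yields uniform boundedness of $\boldsymbol{\varphi}^{(j)}$ in $(W^{m,2}(\Omega))^n$. Extracting subsequences (not relabelled) and using the compact embedding $W^{m,2}(\Omega) \hookrightarrow C^{1,\alpha}(\overline\Omega)$, I obtain $\varphi_k^{(j)} \to \hat\varphi_k$ in $C^{1,\alpha}(\overline\Omega)^n$ with $\hat\varphi_k \in \diffeo$ by the Fatou argument of Lemma \ref{lem:ex:phi} applied to $W$. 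In particular $\det(D\varphi_k^{(j)}) \to \det(D\hat\varphi_k)$ uniformly and the sequence $\det(D\varphi_k^{(j)})$ is uniformly bounded on $\Omega$.

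Next I must extract weak $L^2$ limits for the image sequence. The bound on $\mathcal{E}(I_0^{(j)};B)$ together with the coercivity argument already used in Lemma \ref{lem:uni:seq} (via \cite[Theorem~6.115]{BL2011}, using that $A$ does not vanish on constants) yields that $\|I_0^{(j)}\|_{L^2(\Omega)}$ is bounded. I then propagate the bound through the chain: by change of variables and boundedness of $\det(D\varphi_k^{(j)})$,
\[ \bigl\| I_k^{(j)} \circ (\varphi_k^{(j)})^{-1} \bigr\|_{L^2(\Omega)}^2 = \int_\Omega |I_k^{(j)}|^2 \det(D\varphi_k^{(j)}) \,\dx y \le C\,\|I_k^{(j)}\|_{L^2(\Omega)}^2, \]
and the triangle inequality together with $\|I_k^{(j)} \circ (\varphi_k^{(j)})^{-1} - I_{k+1}^{(j)}\|_{L^2} \le C'$ give an inductive bound on $\|I_{k+1}^{(j)}\|_{L^2}$. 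Thus $\{I_k^{(j)}\}_j$ is bounded in $L^2(\Omega)$ for every $k$ and I may pass to a subsequence such that $I_k^{(j)} \rightharpoonup \hat I_k$ weakly in $L^2(\Omega)$.

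The main obstacle is passing to the limit in the coupling term $\|I_k^{(j)} \circ (\varphi_k^{(j)})^{-1} - I_{k+1}^{(j)}\|_{L^2}^2$, since the composition mixes weakly converging functions with changing deformations. I handle this by showing $I_k^{(j)} \circ (\varphi_k^{(j)})^{-1} \rightharpoonup \hat I_k \circ \hat\varphi_k^{-1}$ weakly in $L^2(\Omega)$. For any test function $\psi \in C(\overline\Omega)$, the change of variables formula from property ii) gives
\[ \int_\Omega \psi \cdot I_k^{(j)} \circ (\varphi_k^{(j)})^{-1} \,\dx x = \int_\Omega (\psi \circ \varphi_k^{(j)}) \cdot I_k^{(j)} \cdot \det(D\varphi_k^{(j)}) \,\dx y. \]
Since $\varphi_k^{(j)} \to \hat\varphi_k$ uniformly and $\psi$ is uniformly continuous, $\psi \circ \varphi_k^{(j)} \to \psi \circ \hat\varphi_k$ uniformly, and the same holds for $\det(D\varphi_k^{(j)})$; combining uniform convergence of the product with weak $L^2$ convergence of $I_k^{(j)}$ and a final change of variables back yields the claim. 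Density of $C(\overline\Omega)$ in $L^2(\Omega)$ together with the uniform $L^2$ bound on the compositions extends this to all $L^2$ test functions. With this, weak lower semicontinuity of the squared $L^2$ norm gives
\[ \bigl\|\hat I_k \circ \hat\varphi_k^{-1} - \hat I_{k+1}\bigr\|_{L^2}^2 \le \liminf_{j\to\infty} \bigl\|I_k^{(j)} \circ (\varphi_k^{(j)})^{-1} - I_{k+1}^{(j)}\bigr\|_{L^2}^2, \]
and the boundary identification $\hat I_K = R$ is preserved since $I_K^{(j)} \equiv R$. Combining this with weak lower semicontinuity of $\mathcal{E}(\cdot;B)$ noted after \eqref{ROF}, the Fatou/$W$-lsc argument for the elastic term and weak lsc of the $L^2$ norm of $D^m\varphi_k$, we conclude
\[ \mathcal{J}(\hat{\mathbf I},\hat{\boldsymbol{\varphi}}) \le \liminf_{j\to\infty} \mathcal{J}(\mathbf{I}^{(j)},\boldsymbol{\varphi}^{(j)}) = \inf \mathcal{J}, \]
so $(\hat{\mathbf I},\hat{\boldsymbol{\varphi}})$ is a minimizer.
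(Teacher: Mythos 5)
Your proposal is correct and follows essentially the same route as the paper's proof: uniform $W^{m,2}$ bounds via Gagliardo--Nirenberg and strong $C^{1,\alpha}$ convergence of the deformations, coercivity of $\mathcal{E}$ plus the change-of-variables estimate to propagate $L^2$ bounds along the image chain, weak convergence of $I_k^{(j)}\circ(\varphi_k^{(j)})^{-1}$ to $\hat I_k\circ\hat\varphi_k^{-1}$ by testing against continuous functions, and lower semicontinuity of all terms. The only (harmless) deviation is that you work directly with the minimizing sequence instead of first replacing each $\boldsymbol{\varphi}^{(j)}$ by a deformation optimal for the fixed $\mathbf{I}^{(j)}$ via Lemma~\ref{lem:ex:phi}, a step the paper's argument does not actually need.
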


\begin{proof}
		The outline of the proof is as follows. 
		First, we take a minimizing sequence of $\mathcal J$ 
		and show that the deformations and the intermediate images have a weakly convergent subsequence. 
		Then, we prove that their concatenation is also weakly convergent 
		and use this to get the weak lower semi-continuity of the functional.
		
		1. Let 
		$\{({\mathbf I}^{(j)}, \boldsymbol{\phi}^{(j)})\}_{j\in\mathbb{N}}$
		be a minimizing sequence of  ${\mathcal J}$. 
		Then ${\mathcal J} ({\mathbf I}^{(j)},\boldsymbol{\phi}^{(j)} ) \le C$ for all $j\in\mathbb{N}$.
		By Lemma \ref{lem:ex:phi}, we find for each ${\mathbf I}^{(j)}$ 
		a sequence of diffeomorphisms $\boldsymbol{\varphi}^{(j)}$ 
		such that
		\[
		\mathcal{J}\bigl(\vec{I}^{(j)} ,\boldsymbol{\varphi}^{(j)}\bigr) 
		\le 
		\mathcal{J} \bigl(\vec{I}^{(j)},\boldsymbol{\varphi}\bigr) 
		\]
		for all $\boldsymbol{\varphi}\in \mathcal{A}^K$.
		Then, we know 
		$\lVert D^m \varphi_k^{(j)}\rVert_{L^2(\Omega)}^2<\frac{1}{\nu}C$ 
		for all $j\in\mathbb{N}$ and $k = 0,\dots,K-1$. 
		As in the first part of the proof of Lemma \ref{lem:ex:phi}
		we conclude that $\{ \varphi_k^{(j)} \}_{j\in\mathbb{N}}$ 
		is bounded in 
		$(W^{m,2}(\Omega))^n$,
		so that there exists a subsequence 
		converging weakly in $(W^{m,2}(\Omega))^n$ 
		and strongly in $(C^{1,\alpha}(\overline{\Omega}))^n$ to $\hat{\varphi}_k$. 
		Set $\hat{\boldsymbol{\varphi}} \coloneqq \left( \hat{\varphi}_k \right)_{k=0}^{K-1}$ and let us denote this subsequence again by $\{ \varphi_k^{(j)} \}_{j\in\mathbb{N}}$ 
		and define $\boldsymbol{\varphi}^{(j)} \coloneqq (\varphi^{(j)}_k)_{k=0}^{K-1}$.
		
		2.
		Since ${\mathcal J}(\vec{I}^{(j)} ,\boldsymbol{\varphi}^{(j)}) \le C$ for all $j \in \mathbb N$, coercivity of $\mathcal E$ implies that $\|{I}_{0}^{(j)} \|_{L^2(\Omega)}$ is bounded.
		Additionally, we conclude for $k=0,\ldots,K-2$ that
	\begin{align*}
	\bigl \|{I}_{k+1}^{(j)} \bigr \|_{L^2(\Omega)} 
	&\le 
	\bigl \| {I}_{k}^{(j)} \circ (\varphi_{k}^{(j)})^{-1} -  {I}_{k+1}^{(j)} \bigr \|_{L^2(\Omega)} 
	+ \bigl \|{I}_{k}^{(j)}\circ (\varphi_{k}^{(j)})^{-1} \bigr\|_{L^2(\Omega)}
	\\
	&\le C^\frac12 + \bigl \|{I}_{k}^{(j)}\circ (\varphi_{k}^{(j)})^{-1} \bigr \|_{L^2(\Omega)}.
	\end{align*}
	Further, $\varphi_k^{(j)}$ is convergent in $( C^{1,\alpha}(\overline{\Omega}))^n$
	and consequently $\det(D\varphi_k^{(j)}) \le \tilde C$ on $\Omega$ for $k=0,\ldots,K-1$.
	Then, it holds
	\begin{align*}
	\bigl \|{I}_1^{(j)} \bigr\|_{L^2(\Omega)} &\le C^\frac12 + \bigl \|I_0^{(j)} \circ (\varphi_0^{(j)})^{-1} \bigr \|_{L_2(\Omega)}\\
	\bigl \|{I}_2^{(j)} \bigr \|_{L^2(\Omega)} 
	&\le C^\frac12 + \bigl \| I_1^{(j)}\circ (\varphi_1^{(j)})^{-1} \bigr \|_{L^2(\Omega)}\\
	&= C^\frac12 + \left( \int_\Omega \bigl| {I}_1^{(j)} \bigr|^2 \,\det \bigl( D \varphi_1^{(j)} \bigr)  \dx x \right)^\frac12\\
	&\le C^\frac12 + {\tilde C}^{\frac12}\bigl\| {I}_1^{(j)}\bigr\|_{L^2(\Omega)}.  
	\end{align*}
	Successive continuation shows that the sequence $\{{\vec{I}}^{(j)} \}_{j\in\mathbb{N}}$ is bounded in $(L^2(\Omega))^{K}$. 
	Hence, there exists a weakly convergent subsequence, also denoted by $\{{\vec{I}}^{(j)} \}_{j\in\mathbb{N}}$, which converges to $\hat {\vec I} \in (L^2(\Omega))^{K}$.
	
	3. Next, we show the weak convergence of $I_k^{(j)} \circ (\varphi_k^{(j)})^{-1}$ to $\hat I_k \circ \hat \varphi_k^{-1}$. 
	Since the sequence is bounded, it suffices to test with $g \in C_c^\infty(\Omega)$. It holds
	\[
	\int_{\Omega} \Big(I_k^{(j)} \circ \big(\varphi_k^{(j)}\big)^{-1} - \hat I_k \circ \hat \varphi_k^{-1} \Big)g \, \dx x
	=
	{\mathcal I}_1^{(j)} + {\mathcal I}_2^{(j)}
	\]
	with
	\begin{align*}
	{\mathcal I}_1^{(j)} &\coloneqq\int_{\Omega} \Big(I_k^{(j)} \circ \big(\varphi_k^{(j)}\big)^{-1} - I_k^{(j)} \circ \hat \varphi_k^{-1}\Big)g \, \dx x,\\
	{\mathcal I}_2^{(j)} &\coloneqq \int_\Omega \Big(I_k^{(j)} \circ \big(\hat \varphi_k\big)^{-1} - \hat I_k \circ \hat \varphi_k^{-1} \Big)g \, \dx x.
	\end{align*}
	Using the change of variables formula, we obtain 
	\[
	{\mathcal I}_2^{(j)} = \int_\Omega \left( I_k^{(j)}  - \hat I_k  \right) \det \left( D \hat \varphi_k \right)  g \circ \hat \varphi_k \, \dx x.
	\]
	Since $ \det \left( D \hat \varphi_k \right)  g\circ \hat \varphi_k \in L^2(\Omega)$, the weak convergence of $I_k^{(j)}$ to $\hat I_k$ implies that ${\mathcal I}_2^{(j)}$ converges to zero as $j \rightarrow \infty$.
	Using the change of variables formula again, ${\mathcal I}_1^{(j)}$ can be estimated by
	\begin{align*}
	{\mathcal I}_1^{(j)}
	&=  \int_{\Omega} I_k^{(j)} \Big(g \circ \varphi_k^{(j)} \det \bigl( D \varphi_k^{(j)} \bigr) 
	- g \circ \hat \varphi_k \det \left( D \hat \varphi_k \right) \Big) \dx x\\
	&\leq  
	\bigl \Vert I_k^{(j)} \bigr \Vert_{L^2(\Omega)} 
	\bigl \Vert g \circ \varphi_k^{(j)}  \det \bigl( D \varphi_k^{(j)}  \bigr)
	- g \circ \hat \varphi_k \det \left( D \hat \varphi_k  \right) \bigr \Vert_{L^2(\Omega)}.
	\end{align*}
	Since $\{ I_k^{(j)} \}_{j\in\mathbb{N}}$ is bounded, it suffices to show the convergence of the second factor. 
	With $g_k^{(j)} \coloneqq g \circ \varphi_k^{(j)}$ and $\hat g_k \coloneqq  g \circ \hat \varphi_k$ it follows that
	\begin{align*}
	& \bigl \Vert g_k^{(j)}  \det \bigl( D \varphi_k^{(j)}  \bigr) -\hat g_k \det (D \hat \varphi_k) \bigr \Vert_{L^2(\Omega)}\\
	\leq& \; \bigl \Vert g_k^{(j)} \det \bigl( D \varphi_k^{(j)}  \bigr)-g_k^{(j)} \det \left( D \hat \varphi_k \right) \bigr \Vert_{L^2(\Omega)} 
	+ \bigl \Vert g_k^{(j)} \det \left( D \hat \varphi_k  \right) - \hat g_k\det \left( D \hat \varphi_k  \right) \bigr \Vert_{L^2(\Omega)}\\
	\leq& \; 
	C \bigl \Vert \det \bigl( D \varphi_k^{(j)} \bigr) -\det \left( D \hat \varphi_k  \right) \bigr \Vert_{C^0(\overline \Omega)} 
	+ C  \bigl \Vert g_k^{(j)} - \hat g_k  \bigr \Vert_{L^2(\Omega)}.
	\end{align*}
	The first term converges to zero since $D\varphi_k^{(j)}$ is convergent. 
	Uniform convergence of $\varphi_k^{(j)}$ together with the uniform continuity of $g$ implies that $g_k^{(j)}$ 
	converges uniformly to $\hat g_k$.
	Now boundedness of $\Omega$ implies that the second term converges to zero.
	
	4. It remains to show that $(\hat{\vec{I}} ,\hat{ \boldsymbol{\varphi}})$ is a minimizer of 
	${\mathcal J}(\vec{I},\boldsymbol{\phi})= {\mathcal E}(I_0;B) + \beta {\mathcal F}({\vec I}, \boldsymbol{\varphi})$. 
	It holds
	\begin{align*}
	\liminf_{j \to \infty} {\mathcal F} \bigl(\vec{I}^{(j)} ,\boldsymbol{\varphi}^{(j)}\bigr) 
	\ge&
	\sum_{k=0}^{K-1} \liminf_{j \to \infty} \int_{\Omega}  W \bigl( D\varphi_k^{(j)} \bigr) \dx x
	+ \nu \liminf_{j \to \infty}  \int_{\Omega} \bigl|D^m \varphi_k^{(j)} \bigr|^2 \dx x \\
	+ & \, 
	\liminf_{j \to \infty} \bigl\| I_k^{(j)} \circ \big(\varphi_k^{(j)}\big)^{-1} -  I_{k+1}^{(j)} \bigr\|_{L^2(\Omega)}^2.
	\end{align*}
	The components of $\varphi_k^{(j)}$ weakly converge in $W^{m,2}(\Omega)$, those of $D \varphi_k^{(j)}$
	converge in $C^{0}(\overline \Omega)$, and $I_k^{(j)}$, $I_k^{(j)}\circ(\varphi_k^{(j)})^{-1}$ weakly converges in $L^2(\Omega)$. 
	We use this together with the facts that the first summand is lsc, the second one weakly lsc and $\|f-g\|_{L^2(\Omega)}^2$ 
	is weakly lsc (convex and lsc) in both arguments to conclude
	\[\liminf_{j \to \infty} {\mathcal F} \bigl(\vec{I}^{(j)} ,\boldsymbol{\varphi}^{(j)}\bigr) 
	\ge {\mathcal F} \bigl(\vec{\hat I} ,\boldsymbol{\hat \varphi}\bigr) .\]
	Since ${\mathcal E}(I_0;B)$ is weakly lsc in $I_0$  we obtain
	\[
	\inf_{\vec{I},\boldsymbol{\phi}}  {\mathcal J} \bigl(\vec{I},\boldsymbol{\phi}\bigr) = 
	\liminf_{j \to \infty} {\mathcal J} \bigl(\vec{I}^{(j)} ,\boldsymbol{\phi}^{(j)}\bigr) 
	\ge
	\liminf_{j \to \infty} {\mathcal J} \bigl(\vec{I}^{(j)} ,\boldsymbol{\varphi}^{(j)}\bigr) 
	\geq 
	{\mathcal J} \bigl(\hat{\vec{I}} ,\hat{ \boldsymbol{\varphi}} \bigr). 
	\]
\end{proof}

Next, we prove that the minimizers of ${\mathcal J}$ depend stably on the input data $B$. 
To emphasize the dependence of $\mathcal J$ on $B$, we use the notation $\mathcal J_B$ instead of $\mathcal J$.

\begin{theorem}[Stability]\label{thm:stab}
	Let $R \in L^2(\Omega)$.
	Further, let $\{B_j\}_{j\in\mathbb{N}}$ be a sequence in $\mathcal Y$ converging to $B \in \mathcal Y$.
	For each $j \in \NN$, we choose a minimizer $({\mathbf I}^{(j)}, \boldsymbol{\varphi}^{(j)})$ of $\mathcal J_{B_j}$.
	Then, there exists a subsequence of $\{({\mathbf I}^{(j)}, \boldsymbol{\varphi}^{(j)})\}_{j\in\mathbb{N}}$ which converges weakly to a minimizer $(\hat{\vec I}, \hat{\boldsymbol{\varphi}})$ of $\mathcal J_{B}$.
\end{theorem}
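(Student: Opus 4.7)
The plan is to follow the standard three-step recipe for stability in variational regularization: establish a uniform bound on the minimizers, extract a weakly/strongly convergent subsequence via the same compactness arguments as in Theorem \ref{main}, and pass to the limit in $\mathcal{J}_{B_j}$ using joint weak lower semi-continuity.

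First I would control the minimal energy uniformly in $j$. Since $B_j \to B$ strongly in $\mathcal{Y}$, the sequence $\{B_j\}$ is bounded, and testing the minimizers against a fixed admissible competitor, for instance $\tilde{\boldsymbol{\varphi}} = (\mathrm{id},\ldots,\mathrm{id})\in\diffeo^K$ together with $\tilde I_0 = \cdots = \tilde I_{K-1} = 0$, yields
\[
\mathcal{J}_{B_j}\bigl({\mathbf I}^{(j)},\boldsymbol{\varphi}^{(j)}\bigr)
\le
\mathcal{J}_{B_j}(\tilde{\mathbf I},\tilde{\boldsymbol{\varphi}})
= \tfrac12\|B_j\|_{\mathcal Y}^2 + 0 + \beta\bigl(K \int_\Omega W(I)\,\dx x + \|R\|_{L^2(\Omega)}^2\bigr) \le C,
\]
uniformly in $j$. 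From this uniform bound, Step 1 of the existence proof of Theorem \ref{main} gives a subsequence of $\{\varphi_k^{(j)}\}_j$ that converges weakly in $(W^{m,2}(\Omega))^n$ and strongly in $(C^{1,\alpha}(\overline{\Omega}))^n$ to some $\hat\varphi_k \in \diffeo$. Then the same successive $L^2$-estimate as in Step 2 of that proof, together with coercivity of $\mathcal{E}(\,\cdot\,;B_j)$ (the latter being uniform in $j$ since $\|B_j\|_{\mathcal Y}$ is bounded), shows that each $\{I_k^{(j)}\}_j$ is bounded in $L^2(\Omega)$, so we can extract a further subsequence with $I_k^{(j)} \rightharpoonup \hat I_k$ in $L^2(\Omega)$, where $\hat I_K = R$ is automatic.

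Next I would pass to the limit. Repeating the argument of Step 3 of the existence proof, the weak convergence of $I_k^{(j)}$ together with the $C^{1,\alpha}$-convergence of $\varphi_k^{(j)}$ yields $I_k^{(j)}\circ(\varphi_k^{(j)})^{-1} \rightharpoonup \hat I_k\circ \hat\varphi_k^{-1}$ weakly in $L^2(\Omega)$, so the three contributions of $\mathcal{F}$ are individually (weakly) lsc as in Step 4 of that proof, giving
\[
\mathcal{F}(\hat{\mathbf I},\hat{\boldsymbol{\varphi}})
\le \liminf_{j\to\infty}\mathcal{F}\bigl({\mathbf I}^{(j)},\boldsymbol{\varphi}^{(j)}\bigr).
\]
Combined with the joint weak lower semi-continuity of $(I,B)\mapsto\mathcal{E}(I;B)$ noted after \eqref{ROF}, applied with $I_0^{(j)}\rightharpoonup\hat I_0$ in $L^2(\Omega)$ and $B_j\to B$ in $\mathcal{Y}$, we obtain
\[
\mathcal{J}_{B}(\hat{\mathbf I},\hat{\boldsymbol{\varphi}})
\le
\liminf_{j\to\infty}\mathcal{J}_{B_j}\bigl({\mathbf I}^{(j)},\boldsymbol{\varphi}^{(j)}\bigr).
\]

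Finally, to show that the weak limit is actually a minimizer, I would fix an arbitrary test pair $({\mathbf I},\boldsymbol{\varphi})\in (L^2(\Omega))^K\times\diffeo^K$ with $I_K = R$ and compare
\[
\mathcal{J}_{B_j}\bigl({\mathbf I}^{(j)},\boldsymbol{\varphi}^{(j)}\bigr)
\le \mathcal{J}_{B_j}({\mathbf I},\boldsymbol{\varphi}),
\]
whose right-hand side converges to $\mathcal{J}_{B}({\mathbf I},\boldsymbol{\varphi})$ because $B \mapsto \tfrac12\|AI_0 - B\|^2_{\mathcal Y}$ is continuous in $B$. Chaining this with the previous liminf inequality yields $\mathcal{J}_B(\hat{\mathbf I},\hat{\boldsymbol{\varphi}}) \le \mathcal{J}_B({\mathbf I},\boldsymbol{\varphi})$ for every admissible $({\mathbf I},\boldsymbol{\varphi})$, which proves the claim. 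The main delicate point is the interplay between the strong convergence $B_j\to B$ and the merely weak convergence $I_0^{(j)}\rightharpoonup \hat I_0$ inside the data fidelity, but this is precisely resolved by joint weak lower semi-continuity of $\mathcal{E}$; every other estimate is a verbatim reuse of the compactness and lsc machinery already developed for Theorem \ref{main}.
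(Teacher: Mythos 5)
Your proposal is correct and follows essentially the same route as the paper's proof: uniform energy bound, reuse of the compactness and weak-convergence machinery from Theorem \ref{main}, joint weak lower semi-continuity of $\mathcal{E}$ together with lsc of $\mathcal{F}$, and comparison with an arbitrary admissible pair using continuity of $B \mapsto \tfrac12\|AI_0-B\|_{\mathcal Y}^2$. The only (harmless) difference is that you obtain the uniform bound by testing against the explicit competitor $(\mathbf 0,\mathrm{id},\dots,\mathrm{id})$, whereas the paper tests against the fixed minimizer $({\mathbf I}^{(1)},\boldsymbol{\varphi}^{(1)})$.
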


\begin{proof}
	1. Due to the convergence of $\{B_j\}_{j \in \NN}$ it holds for every $I_0 \in L^2(\Omega)$ that
	\[\mathcal E(I_0;B_j) = \frac12 \|A I_0 - B_j\|^2_{\mathcal Y} + \alpha TV (I_0) \to \mathcal E(I_0,B).\]
	Hence, there exists $C>0$ with $\mathcal J_{B_j} ({\mathbf I}^{(j)},\boldsymbol{\varphi}^{(j)} ) \leq \mathcal J_{B_j} ({\mathbf I}^{(1)},\boldsymbol{\varphi}^{(1)} ) \le C$ for all $j\in\mathbb{N}$.
	By definition of $\mathcal J_{B_j}$ we obtain $\lVert D^m \varphi_k^{(j)}\rVert_{L^2(\Omega)}^2<\frac{1}{\nu}C$ for all $j\in\mathbb{N}$ and $k = 0,\dots,K-1$.
	As in the first part of the proof of Lemma \ref{lem:ex:phi} we conclude that there exists a subsequence converging weakly in $(W^{m,2}(\Omega))^n$ 
	and strongly in $( C^{1,\alpha}(\overline{\Omega}))^n$ to $\hat \varphi_k$.
	Set $\boldsymbol{\hat \varphi} \coloneqq (\hat \varphi_k)_{k=0}^{K-1}$.
	Let us denote this subsequence again by $\{\varphi_k^{(j)} \}_{j\in\mathbb{N}}$ 
	and define $\boldsymbol{\varphi}^{(j)} \coloneqq (\varphi^{(j)}_k)_{k=0}^{K-1}$.
	
	2. Next, we estimate
	\[C \ge \mathcal E(I_0^{(j)};B_j) \geq \frac12 (\|A I_0^{(j)} - B\|_{\mathcal Y} - \|B - B_j\|_{\mathcal Y})^2 + \alpha TV (I_0^{(j)}) ,\]
	so that the coercivity of $\frac12 \|A I_0 - B\|^2_{\mathcal Y} + \alpha TV (I_0)$ in $I_0$ implies the boundedness of $\{I_0^{(j)}\}_{j \in \mathbb N}$.
	Now, we can reproduce the Steps 2 and 3 from Theorem \ref{main}
	to see that there exists a weakly convergent subsequence, also denoted by $\{{\vec{I}}^{(j)} \}_{j\in\mathbb{N}}$, 
	which converges to $\hat{\vec I} \in \left(L^2(\Omega)\right)^{K}$.
	Additionally, the sequence $I_k^{(j)} \circ (\varphi_k^{(j)})^{-1}$ converges weakly to $\hat I_k \circ \hat \varphi_k^{-1}$.
	
	3. It remains to show that $(\hat{\vec I},\hat{\boldsymbol \varphi})$ minimizes ${\mathcal J}_B$. 
	We can use the lower semi continuity argument for $\mathcal F$ from Theorem~\ref{main} together with the fact that ${\mathcal E}(I;B)$ is jointly lsc to obtain for any $(\vec I, \boldsymbol{\varphi})$ that
	\[
	{\mathcal J_B} \bigl({\hat{\vec{I}}} ,\hat{\boldsymbol{\varphi}} \bigr) \leq
	\liminf_{j \to \infty} {\mathcal J_{B_j}} \bigl(\vec{I}^{(j)} ,\boldsymbol{\varphi}^{(j)}\bigr) \leq
	\liminf_{j \to \infty} {\mathcal J_{B_j}} \bigl(\vec{I} ,\boldsymbol{\varphi}\bigr) =
	{\mathcal J_{B}} \bigl(\vec{I} ,\boldsymbol{\varphi}\bigr). 
	\]
	The last equality follows from the convergence of $B_j$ together 
	with continuity of $\mathcal E$ in $B$. Hence, $(\hat{\vec I}, \hat{\boldsymbol{\varphi}})$ is a minimizer of $\mathcal J_B$.
\end{proof}

This section concludes with a convergence result for vanishing noise. 
Here we additionally need the dependence of $\mathcal J$ on the parameter $\alpha$ (for simplicity we choose $\alpha = \beta$) and hence we use $\mathcal J_{\alpha,B}$.

\begin{theorem}[Convergence]\label{thm:conv}
	Let $R\in L^2(\Omega)$ and $B \in \mathcal Y$, and suppose that there exists $(\tilde{\vec I}, \tilde{\boldsymbol{\varphi}})$ such that $A\tilde{I_0} = B$ 
	and $\mathcal J_{1,B}(\tilde{\vec I}, \tilde{\boldsymbol{\varphi}}) < \infty$.
	Further, assume that 
	$\alpha\colon \RR_{>0} \to \RR_{>0}$ satisfies $\alpha(\delta) \to 0$ 
	and 
	$\frac{\delta}{\alpha(\delta)} \to 0$ as $\delta \to 0$. 
	Assume that $\{\delta_j\}_{j\in\mathbb{N}}$ is a sequence of positive numbers converging to 0 
	and $\{B_j\}_{j\in\mathbb{N}}$ is a sequence in $\mathcal Y$ satisfying $\Vert B - B_j \Vert_\mathcal{Y}^2 \leq \delta_j$ 
	for each $j$.
	Let $({\mathbf I}^{(j)}, \boldsymbol{\varphi}^{(j)})$ be a minimizer of $\mathcal J_{\alpha_j, B_j}$, where $\alpha_j \coloneqq \alpha(\delta_j)$.
	Then, there exists a subsequence of $\{ I_0^{(j)}\}_{j\in\mathbb{N}}$ which weakly converges to an image $\hat I_0$ such that
	$A \hat I_0 = B$.
\end{theorem}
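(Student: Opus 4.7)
The strategy is the classical three-step argument from regularization theory, adapted to the present setting where only $I_0$ is subject to the data fidelity but $\alpha_j$ simultaneously scales both the $TV$ prior and the morphing term $\mathcal{F}$.

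First, I would use the competitor $(\tilde{\vec I}, \tilde{\boldsymbol{\varphi}})$ as an upper bound: since $A\tilde{I}_0 = B$, the definition of a minimizer gives
\[
\tfrac{1}{2}\bigl\|AI_0^{(j)} - B_j\bigr\|_{\mathcal Y}^2 + \alpha_j\bigl( TV(I_0^{(j)}) + \mathcal{F}(\vec{I}^{(j)},\boldsymbol{\varphi}^{(j)})\bigr)
\le \tfrac{1}{2}\|B - B_j\|_{\mathcal Y}^2 + \alpha_j C,
\]
where $C \coloneqq TV(\tilde{I}_0) + \mathcal{F}(\tilde{\vec I},\tilde{\boldsymbol{\varphi}}) < \infty$. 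Using $\|B - B_j\|_{\mathcal Y}^2 \le \delta_j$, dividing by $\alpha_j$, and exploiting the hypothesis $\delta_j/\alpha_j \to 0$, I would extract two consequences: $TV(I_0^{(j)}) \le C + \delta_j/(2\alpha_j)$ is uniformly bounded, and $\|AI_0^{(j)} - B_j\|_{\mathcal Y}^2 \le \delta_j + 2\alpha_j C \to 0$.

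In the second step I would establish $L^2$-boundedness of $\{I_0^{(j)}\}_{j\in\NN}$. Since $B_j \to B$, the sequence $\|AI_0^{(j)}\|_{\mathcal Y}$ is bounded. Combined with the uniform $TV$ bound and the fact that $A$ does not vanish on constants (the same coercivity ingredient already used in Lemma~\ref{lem:uni:seq} via \cite[Theorem~6.115]{BL2011}), one obtains uniform boundedness of $\|I_0^{(j)}\|_{L^2(\Omega)}$: splitting $I_0^{(j)}$ into its mean and zero-mean part, the Poincaré inequality controls the latter by $TV$, while $A$ acting nontrivially on constants bounds the mean.

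Reflexivity of $L^2(\Omega)$ yields a subsequence with $I_0^{(j_\ell)} \rightharpoonup \hat I_0$. Continuity and linearity of $A$ give $AI_0^{(j_\ell)} \rightharpoonup A\hat I_0$ weakly in $\mathcal Y$. On the other hand, from $\|AI_0^{(j)} - B_j\|_{\mathcal Y} \to 0$ together with $B_j \to B$ strongly, we get $AI_0^{(j)} \to B$ strongly, hence weakly. By uniqueness of weak limits, $A\hat I_0 = B$. The main subtlety lies in step two, namely turning the bound on $TV(I_0^{(j)})$ and on $\|AI_0^{(j)}\|_{\mathcal Y}$ into an $L^2$-bound; the parameter choice $\delta_j/\alpha_j \to 0$ is exactly what prevents the $TV$ term from blowing up, and everything else is a straightforward weak-compactness argument.
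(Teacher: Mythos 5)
Your proof is correct and follows the same overall strategy as the paper's: compare with the competitor $(\tilde{\vec I}, \tilde{\boldsymbol{\varphi}})$ using $A\tilde I_0=B$, deduce boundedness of $\{I_0^{(j)}\}$, extract a weakly convergent subsequence, and identify $A\hat I_0=B$ from the vanishing residual $\Vert AI_0^{(j)}-B_j\Vert_{\mathcal Y}^2\le \delta_j+2\alpha_j C\to 0$. The one step where you genuinely deviate is the coercivity argument. The paper obtains the $L^2$-bound in a single line via $\Vert I_0^{(j)}\Vert_{L^2(\Omega)}^2\le C\,TV(I_0^{(j)})$ and then bounds $TV(I_0^{(j)})$ by $\tfrac{C}{\alpha_j}\mathcal J_{\alpha_j,B_j}(\tilde{\vec I},\tilde{\boldsymbol{\varphi}})$; as written, that inequality ignores the constant (mean) component of $I_0^{(j)}$, which has zero total variation, so it implicitly relies on exactly the decomposition you make explicit: the Poincar\'e inequality for $BV$ (which for $n=2$ lands in $L^2$) controls the zero-mean part by $TV(I_0^{(j)})$, while the assumption that $A$ does not vanish on constants, combined with the boundedness of $\Vert AI_0^{(j)}\Vert_{\mathcal Y}$ inherited from $\Vert AI_0^{(j)}-B_j\Vert_{\mathcal Y}\to 0$ and $B_j\to B$, controls the mean --- the same coercivity mechanism as in \cite[Theorem~6.115]{BL2011} already invoked in Lemma~\ref{lem:uni:seq}. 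So your version of this step is the more careful one. Your identification of the limit is also phrased slightly differently but is equivalent: you use strong convergence $AI_0^{(j)}\to B$ together with weak-weak continuity of $A$ and uniqueness of weak limits, whereas the paper uses weak lower semicontinuity of $\Vert A\cdot-B\Vert_{\mathcal Y}$ together with the same estimate; both are valid.
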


\begin{proof}
	For every $j \in \NN$, it holds
	\begin{align*}
	\bigl \Vert I_0^{(j)} \bigr \Vert^2_{L^2(\Omega)} 
	&\leq C\,TV\bigl(I_0^{(j)}\bigr) 
	\leq \frac{C}{\alpha_j} \mathcal J_{\alpha_j, B_j}({\mathbf I}^{(j)}, \boldsymbol{\varphi}^{(j)}) 
	\leq  \frac{C}{\alpha_j} \mathcal J_{\alpha_j, B_j} (\tilde{\vec I}, \tilde{\boldsymbol{\varphi}}) \\
	&=
	\frac{C}{2 \alpha_j} \| B - B_j\|_{\mathcal Y}^2 + C TV (\tilde {\vec I} ) + C \mathcal{F} (\tilde {\vec I}, \tilde {\boldsymbol{\varphi}})\\
	&\leq 
	\frac{C}{2} \frac{\delta_j}{\alpha_j} + C \mathcal{J}_{1,B} (\tilde{\vec I}, \tilde{ \boldsymbol{\varphi} }).
	\end{align*}
	From the assumptions on $\alpha$ and $\delta$ we deduce  that $\Vert I_0^{(j)} \Vert^2_{L^2(\Omega)}$ is bounded.
	Hence, there exists a weakly convergent subsequence with limit $\hat I_0$.
	Additionally, it holds $\Vert A\hat I_0 -B \Vert_{\mathcal Y}^2 \leq \liminf_{j \to \infty} \Vert AI_0^{(j)} - B_j \Vert_{\mathcal Y}^2$.
	Now we can estimate
	\begin{equation}
	\bigl \Vert AI_0^{(j)} - B_j \bigr \Vert_{\mathcal Y}^2 \leq {\mathcal J}_{\alpha_j, B_j} ({\mathbf I}^{(j)}, \boldsymbol{\varphi}^{(j)}) 
	\leq {\mathcal J}_{\alpha_j, B_j} (\tilde{\vec I}, \tilde{\boldsymbol{\varphi}}) 
	= \alpha_j \mathcal{J}_{1,B}(\tilde{\vec I}, \tilde{\boldsymbol{\varphi}}) + \frac12 \Vert B - B_j \Vert_{\mathcal Y}^2.
	\end{equation}
	Since the two rightmost terms converge to zero, this implies $A\hat I_0 = B$.
\end{proof}

\section{Minimization Approaches} \label{sec:disc_minimization}		
In this section, we propose two different alternating minimization schemes.
The first one is known as PALM and  updates in each step the deformations and images via proximal computations.
Convergence of the whole iteration sequence to a critical point is ensured.
The second one just alternates the minimization of the deformations and the images.

Note that solving the \emph{coupled} problem in $(\vec I,\boldsymbol{\varphi})$ e.g.~with a gradient scheme is very time and memory consuming, 
since all $I_k$ and $\varphi_k$ are treated at the same time.
Moreover, the coupling $I_{k}^{(j)} \circ \varphi_{k}^{-1}$ is non-convex and hence it is difficult to provide convergence results for general schemes.

We start with the spatial discretization of $\mathcal J$ in \eqref{path_I_phi}.

\subsection{Spatial Discretization}\label{sec:discretization}

\begin{figure}[ht]
\centering
\includegraphics[width = .6\textwidth]{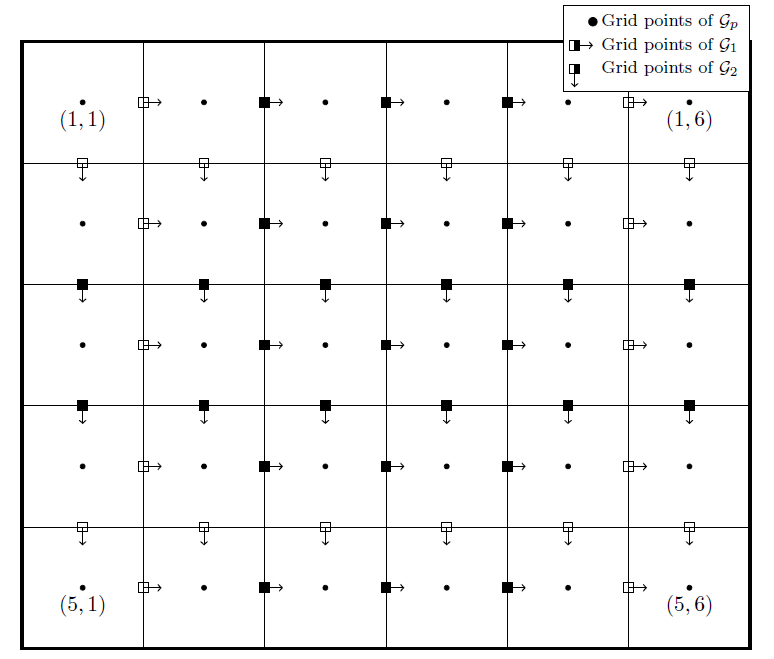}%
	\caption{Illustration of the staggered grid, where empty boxes mean zero movement.\label{fig:grid}}
\end{figure}
Dealing with rectangular digital images, we propose a finite difference approach, where we work on staggered grids, see Fig.~\ref{fig:grid}.
In the following, the spatial discretization is briefly sketched.
The domain of the images $I$ is the (primal) grid 
$\grid\coloneqq \{1,\ldots,n_1\} \times \{1,\ldots,n_2\} $. 
All integrals are approximated on the integration domain 
$\overline \Omega\coloneqq [\tfrac12,n_1 + \tfrac12] \times [\tfrac12,n_2 + \tfrac12]$
by the midpoint quadrature rule, i.e., with pixel values defined on $\grid$.
Further, it is assumed that for the operator $A$ a discrete version $A\colon \mathcal G \to Y$ is known, where $Y$ is some finite dimensional Hilbert space. 

First, we discuss the discretization of $\mathcal F$. As regularizer $W(D\varphi)$ we propose the linearized elastic potential $S(v)$ from Remark~\ref{lin_elastic_pot}
with the replacement $v = (v_1,v_2)^\tT = \varphi - \mathrm{id}$.
Using the $\frac12$-shifted grids
\[
\grid_1 \coloneqq \{\tfrac32,\ldots,n_1- \tfrac12\} \times \{1,\ldots,n_2\},\quad
\grid_2 \coloneqq \{1,\ldots,n_1\} \times \{\tfrac12,\ldots,n_2- \tfrac32\},
\]
we consider $v = (v_1,v_2)^\tT$ with $v_1\colon   \grid_1 \rightarrow \mathbb R$ 
and $v_2\colon \grid_2 \rightarrow \mathbb R$.
Then, the spatially discrete version of $S$ reads 
\begin{align*}	
	\mathcal{S}(v) &= \mu \left( \left\|D_{1,x_1} v_1 \right\|_F^2 +  \left\| v_2 D_{2,x_2}^\tT \right\|_F^2 + \frac{1}{2} \bigl\|v_1 D_{1,x_2}^\tT 
	+  D_{2,x_1} v_2 \bigr\|_F^2\right)\\&\quad
	+ \frac{\lambda}{2} \bigl\|D_{1,x_1} v_1 + v_2 D_{2,x_2}^\tT  \bigr\|_F^2,
	\end{align*}
where $\| \cdot \|_F$ is the Frobenius norm of matrices 
and $D_{i,x_j}$ denotes the forward differences operator (matrix) for $v_i$ in $x_j$-direction.
The higher order term $\int_\Omega \lvert D^m\varphi\rvert^2\, \dx x$ 
with $m=3$ is discretized by	
\[
{\mathrm D}_3(v) \coloneqq \sum_{i=0}^{3} \bigl\|D_{1,x_1}^i v_1 D_{1,x_2}^{3-i}  \bigr\|_F^2 + \bigl\|D_{2,x_1}^i v_2 D_{2,x_2}^{3-i}  \bigr\|_F^2 
+ \eta \bigl(\lVert v_1\rVert_F^2+\lVert v_2\rVert_F^2\bigr),
\]
where central differences operators are used for the partial derivatives of order two and three. 
Note that we added the squared Frobenius norm of the $v_i$, $i=1,2$ for a better control of the displacement value.
To cope with the remaining deformation term in \eqref{path}, we approximate $\varphi^{-1} \approx \mathrm{id}-v$
such that the data term simplifies to
\begin{equation}\label{eq:DataTerm}
	\int_\Omega \bigl\vert I_k^{(j)}\bigl(x-v(x)\bigr) - I_{k+1}^{(j)}(x) \bigr\vert^2 \, \dx x.
\end{equation}
This integral is evaluated using the midpoint quadrature rule. 
Since $v_i$ is only defined on $\grid_i$, $i=1,2$ and not on $\grid$, the averaged version $Pv =(P_1v_1,P_2v_2)^\tT \colon \grid \to \RR^2$ is used.	
In general $x - Pv(x) \not \in {\cal G}$, so that the image $I_k\left( x-Pv(x) \right)$ has to be interpolated from its values on $\grid$.
For this purpose linear interpolation with an interpolation matrix $P_I$ is used.
Note that also interpolation matrices with higher space regularity or splines can be used.
Summarizing, the discrete version of \eqref{path} reads
\begin{equation}
	{\mathcal F}(\boldsymbol v; \boldsymbol I) \coloneqq \sum_{k=1}^{K}\mathcal{S}(v_k) + \nu {\mathrm D}_3(v_k) 
	+ \sum_{x \in \grid} \bigl \vert P_I\bigl(x-Pv_k\bigr)I_k - P_I(x)I_{k+1} \bigr\vert^2.
\end{equation}
	
It remains to discretize $\mathcal E$, which is done by using the midpoint rule for the data term.
For the TV-term the forward differences $D_{x_i}$ in $x_i$, $i=1,2$, direction are used
\[
TV(I) \coloneqq \left \| \sqrt{(D_{x_1} I)^2  + (I D_{x_2}^\tT)^2} \right \|_1,
\]
where the square and the square root are meant componentwise, and $\| \cdot \|_1$ is the sum of the entries of the matrix.
Then, the discrete functional reads
\[
\mathcal E(I;B) = \Vert AI - B \Vert_F^2 + \left \| \sqrt{(D_{x_1} I)^2  + (I D_{x_2}^\tT)^2} \right \|_1.
\]

\subsection{PALM}\label{sec:palm}
Our first approach for the minimization of $\mathcal J$ is based on PALM \cite{BST14, PS17}. 
This algorithm aims to minimize a functional
\begin{equation} \label{palm_functional}
	\argmin_{x_1 \in E_1,x_2 \in E_2} \bigl\{ H(x_1,x_2) + G_1(x_1) + G_2(x_2) \bigr\}
	\end{equation}
by iterating 
	\begin{align}  \label{palm}
		&x_1^{(j+1)} = \prox_{\tau G_1}\left(x_1^{(j)} - \frac{1}{\tau} \nabla_{x_1} H\bigl(x_1^{(j)}, x_2^{(j)}\bigr)\right),\\
		&x_2^{(j+1)} = \prox_{\sigma G_2}\left(x_2^{(j)} - \frac{1}{\sigma} \nabla_{x_2} H\bigl(x_1^{(j+1)}, x_2^{(j)}\bigr)\right)	,	
	\end{align}
where $\tau,\sigma > 0$ 
and $\prox_{\tau f}	(x) \coloneqq \argmin_{y} \frac12 \|x-y\|_2^2 + \tau f(y)$ denotes the proximal mapping of $f$,
which is uniquely determined for proper, convex and lsc functions $f$.
The convergence result is stated in the following theorem from \cite[Theorem~1]{BST14}. 
Note that in the theorem the proximal map is also defined for non-convex functions. 
However, the involved functions in our application are convex, so that no further details on this topic are provided.
\begin{theorem}\label{thm:palm}
	Let $E_1, E_2$ be Euclidean spaces and $H\colon E_1 \times E_2 \rightarrow \mathbb R \cup\{+\infty\}$, 
	$G_i\colon E_i \rightarrow \mathbb R \cup\{+\infty\}$, $i=1,2$, 
	be proper, lsc functions.
	Assume that $H$ is continuously differentiable with locally Lipschitz continuous gradient 
	and that both $x_i \mapsto \nabla_{x_i} H(x_1,x_2)$ are globally Lipschitz, 
	where the constants $L_1(x_2), L_2(x_1)$ possibly depend on the fixed variable.
	Let $G_1 + G_2 + H$ in \eqref{palm_functional} fulfill the Kurdyka–Łojasiewicz (KL) property.
	Further, assume $\tau > L_1(x_2^{(j)})$ and $\sigma > L_2(x_1^{(j)})$ for all $j \in \NN$.
	If the sequence generated by \eqref{palm} is bounded, then it converges to a critical point.
\end{theorem}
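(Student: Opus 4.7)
The plan is to follow the three-ingredient recipe for convergence of descent methods with a Kurdyka--Łojasiewicz objective, in the spirit of Attouch--Bolte--Redont--Soubeyran. Write $\Psi(x_1,x_2) \coloneqq H(x_1,x_2) + G_1(x_1) + G_2(x_2)$ and $z^{(j)} \coloneqq (x_1^{(j)}, x_2^{(j)})$. Because the sequence $\{z^{(j)}\}$ is assumed bounded, its closure lies in a compact set on which $\nabla H$ is Lipschitz with a uniform constant; this lets me replace the locally varying constants $L_i(x_{3-i}^{(j)})$ by fixed values throughout, and likewise keep $\tau,\sigma$ in a bounded range.

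First, I would establish a \emph{sufficient decrease} property. Since $x_1^{(j+1)}$ is a proximal step with stepsize $\tau > L_1(x_2^{(j)})$ applied to $x_1 \mapsto H(x_1, x_2^{(j)}) + G_1(x_1)$, the descent lemma for functions with Lipschitz gradient yields
\[
\Psi\bigl(x_1^{(j+1)},x_2^{(j)}\bigr) + \tfrac{\tau - L_1(x_2^{(j)})}{2}\bigl\|x_1^{(j+1)} - x_1^{(j)}\bigr\|^2 \le \Psi\bigl(x_1^{(j)},x_2^{(j)}\bigr),
\]
and the analogous bound holds for the $x_2$-update. Summing the two gives $\Psi(z^{(j+1)}) + c\|z^{(j+1)} - z^{(j)}\|^2 \le \Psi(z^{(j)})$ for some $c>0$. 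Thus $\{\Psi(z^{(j)})\}$ is nonincreasing, it converges (being bounded below on the compact trajectory), and telescoping yields the square summability $\sum_j \|z^{(j+1)}-z^{(j)}\|^2 < \infty$.

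Second, I would derive a \emph{subgradient bound}. The proximal optimality conditions produce $-\tau(x_1^{(j+1)} - x_1^{(j)}) - \nabla_{x_1} H(z^{(j)}) \in \partial G_1(x_1^{(j+1)})$ and an analogous inclusion for the $x_2$-step. Adding the appropriate gradients of $H$ evaluated at $z^{(j+1)}$ and using the Lipschitz continuity of $\nabla H$ on the compact trajectory, one gets an explicit $w^{(j+1)} \in \partial \Psi(z^{(j+1)})$ with $\|w^{(j+1)}\| \le C\,\|z^{(j+1)} - z^{(j)}\|$. Together with closedness of the graph of the limiting subdifferential and the (lower semi)continuity of $\Psi$, this forces every cluster point of $\{z^{(j)}\}$ to be a critical point of $\Psi$, and $\Psi$ to be constant on the cluster set.

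Third, I would invoke the KL property at a cluster point $z^*$: with a desingularizing function $\phi$ there is a neighborhood on which $\phi'(\Psi(z) - \Psi(z^*))\,\mathrm{dist}(0,\partial\Psi(z)) \ge 1$. Combining this with the decrease and subgradient bounds, and telescoping $\phi(\Psi(z^{(j)}) - \Psi(z^*))$ with a Young-type split, upgrades square summability to the stronger $\ell^1$-bound $\sum_j \|z^{(j+1)} - z^{(j)}\| < \infty$. This makes $\{z^{(j)}\}$ Cauchy and hence convergent to $z^*$, which is critical by the second step. I expect this final KL-based summability step to be the main obstacle: converting the abstract inequality into an $\ell^1$ estimate requires a careful case split (either $\Psi(z^{(j)}) = \Psi(z^*)$ at a finite index, in which case the iteration is already stationary, or the trajectory only eventually enters the KL neighborhood) and a delicate interplay between the concavity of $\phi$ and the quadratic decrease. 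The first two ingredients are comparatively mechanical once Lipschitz constants and stepsizes are localized via the boundedness assumption.
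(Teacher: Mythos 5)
Your three-step argument (sufficient decrease from the descent lemma with $\tau > L_1(x_2^{(j)})$, a relative-error subgradient bound from the proximal optimality conditions, and the KL desingularization upgrading square summability to finite length and hence Cauchyness) is correct and is precisely the proof of this result in the cited reference \cite[Theorem~1]{BST14}, which the paper invokes without reproducing. So your proposal matches the intended argument; the only point to keep explicit is the uniformization of the Lipschitz constants and of the KL inequality over the (compact) set of cluster points, which you already address via the boundedness assumption.
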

For our problem we choose the splitting
\begin{align}
	&G_1(\vec I) =  \alpha TV(I_0) + \frac{1}{2} \Vert A I_0 - B\Vert_F^2,\\
	&G_2(\vec v) =  0,\\
	&H(\vec I, \vec v) = \beta \left( \sum_{k=0}^{K-1} \sum_{x \in \grid} \bigl|P_I\bigl(x-P_v v_k(x)\bigr)I_{k} - P_I(x)I_{k+1}\bigr|^2 + \alpha \mathcal{S}(v_k) + \nu {\mathrm D}_3(v_k)\right).  
\end{align}
Then the iteration \eqref{palm} reads
\begin{align}
	&\vec I^{(j+1)} = \prox_{\tau G_1}\left(\vec I^{(j)} - \frac{1}{\tau} \nabla_{\vec I} H\bigl(\vec I^{(j)}, \vec v^{(j)}\bigr)\right),\\
	&\vec v^{(j+1)} = \vec v^{(j)} - \frac{1}{\sigma} \nabla_{\vec v} H\bigl(\vec I^{(j+1)}, \vec v^{(j)}\bigr).
\end{align}
From the structure of $H$ we deduce that the $v_k$, $k=0,\ldots,K-1$, can be computed separately.
Note that the second term in $G_1$ can also be added to $H$, but this only makes sense if $\Vert A^\tT A \Vert$ is small since otherwise the Lipschitz constant gets too large.
The $I_k$, $k=1,\ldots,K$, can also be updated separately \cite[Section 3.6]{BST14}, which possibly improves the Lipschitz constants.
If the interpolation matrix $P_I$ originates from smooth piecewise polynomial basis functions and $\tau, \sigma$ are chosen accordingly, all conditions of Theorem~\ref{thm:palm} are satisfied.
The proximal map $\prox_{\tau G_1}$ can be computed efficiently by primal-dual algorithms from convex analysis
as for  example by the Chambolle-Pock algorithm \cite{CP11,PCCB09}.
Indeed there is a vast literature how to solve problems of this kind, see e.g.~\cite{BSS2014,CP16} for an overview.

\subsection{Alternating Minimization Approach}\label{sec:alternativapproach}
The computation of $\prox_{\tau G_1}$ with a primal-dual algorithm requires an inner iteration for every step of PALM.
If the evaluation of $A$ is computationally expensive, this can result in high computation effort due to many operator evaluations.
Our numerical experiments indicated that PALM needs relatively many outer iterations and hence also many evaluations of the operator.
Therefore, we want to present a second alternating scheme to minimize $\mathcal{J}(\vec I,\boldsymbol{\varphi})$ 
which needed fewer outer iterations in our experiments.

Starting with $\vec I^{(0)},\vec \varphi^{(0)}$ we iterate for $j=0,\ldots$:
\\
1. For  $k= 0,1,\ldots,K-1$, we compute
\begin{equation} \label{deform}
\varphi_k^{(j)} = \argmin_{\varphi_k \in \mathcal A}
\left\{ 
\int_\Omega W(D\varphi_k) + \nu\lvert D^m\varphi_k\rvert^2
+ \bigl| I_{k}^{(j)} \circ \varphi_{k}^{-1}  - I_{k+1}^{(j)}\bigr|^2 \dx x \right\}. 
\end{equation}
2. For given $A \in L(L^2(\Omega),{\mathcal Y})$, $B \in {\mathcal Y}$ and $R \in L^2(\Omega)$, we solve
\begin{align}
	\begin{split}
	\vec I^{(j)} 
	=& 
	\argmin_{\vec I \in (L^2(\Omega))^K} \left\{ 
	\beta \sum_{k=0}^{K-1} \bigl\Vert I_{k} \circ ( \varphi_{k}^{(j)})^{-1}  - I_{k+1} \bigr\Vert_{L^2(\Omega)}^2 
	+ \frac{1}{2} \| A I_0 - B\|^2_{\mathcal Y} + \alpha TV(I_0) \right\}.\label{im_seq}
	\end{split}
\end{align}
For the first step the discretization from Section~\ref{sec:discretization} is applied which results in the minimization of
\begin{equation} \label{eq:regu}
{\mathcal R}\bigl(v_k;I_k^{(j)}, I_{k+1}^{(j)}\bigr) \coloneqq \mathcal{S}(v_k) + \nu {\mathrm D}_3(v_k) 
+ \sum_{x \in \grid} \bigl \vert P_I\bigl(x-Pv_k\bigr)I_k - P_I(x)I_{k+1} \bigr\vert^2,
\end{equation}
for $k=0,\ldots,K-1$.
This problem can be solved by a Quasi-Newton method, details can be found in \cite{NPS17,PPS17}.

For the computation of the image sequence in the second step of the algorithm we use the substitution from the proof of Lemma~\ref{lem:uni:seq}.
Setting 
$\psi_k \coloneqq \varphi_{k-1} \circ \ldots \circ \varphi_0$, 
$w_k(x) \coloneqq  \det\left( D \psi_k(x) \right)$ and
$F_0 \coloneqq I_0$, $F_k \coloneqq I_k \circ \psi_{k}$, 
we can transform \eqref{im_seq} to
\begin{equation}\label{CoupledReco}
\argmin_{\vec F} \left\{ 
\beta \sum_{k=0}^{K-1} \| (F_{k} - F_{k+1}) \sqrt{w_{k+1}}\|_{L^2(\Omega)}^2 
\; + \frac{1}{2} \| A F_0 - B\|^2_{\mathcal Y} + \alpha TV(F_0)\right\}.  
\end{equation}
The functional is discretized on $\grid$, using the approach from Section~\ref{sec:discretization}. 
We propose to solve the discrete version of \eqref{CoupledReco} with a block-coordinate descent
which fixes alternately $F_0$ and $\bar {\vec F} \coloneqq (F_1,\ldots,F_{K-1})$.
For block-coordinate descent the following convergence result was proven in~\cite[Theorem~14.9, Theorem~14.15]{Beck2017}, see also~\cite{BT13}.
\begin{theorem}\label{thm:alt_min}
	Let $E_1, E_2$ be Euclidean spaces and $G\colon E_1 \times E_2 \rightarrow \mathbb R \cup\{+\infty\}$, $G_i\colon E_i \rightarrow \mathbb R \cup\{+\infty\}$, $i=1,2$ 
	be proper, convex lsc functions.
	Assume further that $G$ is continuously differentiable and that the level sets of $G + G_1 + G_2$ are bounded. Then the minimization problem
	\begin{equation*}
	\argmin_{x_1 \in E_1,x_2 \in E_2} \bigl\{ G(x_1,x_2) + G_1(x_1) + G_2(x_2) \bigr\}
	\end{equation*}
	can be solved by alternating minimization in $x_1$ and $x_2$, 
	i.e., every accumulation point of the generated iteration sequence is a minimizer. 
	The convergence rate for the functional values is $\mathcal{O}(\frac{1}{k})$.
\end{theorem}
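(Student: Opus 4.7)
The plan is to proceed in three stages: establish monotone decrease and boundedness of the iterates, identify accumulation points with minimizers via limiting first-order conditions, and derive the $\mathcal O(1/k)$ rate by a convex sufficient-decrease argument. Set $F \coloneqq G + G_1 + G_2$ and let $\{(x_1^{(k)}, x_2^{(k)})\}_{k \in \mathbb N}$ be generated by the alternating scheme. Each subproblem minimizes a proper convex lsc function over a Euclidean space, and the minimization can be restricted to a sublevel set of $F$, which is bounded by assumption; hence minimizers exist and the recursion is well-defined. By construction $F(x_1^{(k+1)}, x_2^{(k+1)}) \le F(x_1^{(k+1)}, x_2^{(k)}) \le F(x_1^{(k)}, x_2^{(k)})$, so the whole sequence lies in the bounded sublevel set $\{F \le F(x_1^{(0)}, x_2^{(0)})\}$ and admits convergent subsequences.

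For the identification of accumulation points, the block updates yield the inclusion $0 \in \nabla_{x_1} G(x_1^{(k+1)}, x_2^{(k)}) + \partial G_1(x_1^{(k+1)})$ together with the analogous one for $x_2$. A standard sufficient-decrease argument using the descent lemma for $G$ on the bounded iterates shows that successive differences tend to zero along any convergent subsequence. Continuity of $\nabla G$ together with closedness of the subdifferential graphs of the convex lsc functions $G_1$ and $G_2$ then passes both inclusions to the limit, giving $0 \in \nabla_{x_i} G(x_1^\star, x_2^\star) + \partial G_i(x_i^\star)$ for $i=1,2$. Joint convexity of $F$ upgrades these two partial conditions to the global optimality condition $0 \in \partial F(x_1^\star, x_2^\star)$.

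For the sublinear rate I would follow the route of \cite[Theorem~14.9]{Beck2017}. The core estimate is a sufficient decrease of the form $F(x^{(k)}) - F(x^{(k+1)}) \ge \gamma \bigl(F(x^{(k)}) - F^\star\bigr)^2$, obtained by combining block optimality with the descent lemma for $G$ and a convex bound $F(x^{(k)}) - F^\star \le \langle s^{(k)}, x^{(k)} - x^\star \rangle$ for a suitably chosen $s^{(k)} \in \partial F(x^{(k)})$, whose norm is controlled by boundedness of the iterates. Setting $a_k \coloneqq F(x^{(k)}) - F^\star$, this reduces to the recursion $a_{k+1} \le a_k - \gamma a_k^2$, which gives $a_k = \mathcal O(1/k)$ by an elementary induction. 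The main obstacle is the construction of $s^{(k)}$: the alternating updates only satisfy partial (block) optimality conditions, whereas a full subgradient of $F$ is needed to invoke convexity, and it is precisely the interplay between the smooth coupling $G$ and the non-smooth summands $G_1, G_2$ that has to be handled carefully at this point.
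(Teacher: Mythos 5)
The paper itself does not prove this theorem; it simply cites \cite[Theorems~14.9 and~14.15]{Beck2017}, so your sketch has to stand as a complete argument on its own, and as written it has two genuine gaps. First, in the limit-point step you claim that ``a standard sufficient-decrease argument using the descent lemma for $G$'' forces successive differences to vanish along a convergent subsequence. The descent lemma needs a (block-)Lipschitz gradient of $G$, which is not among the hypotheses ($C^1$ plus bounded level sets does not imply it, even on the compact level set), and even granting it, exact block minimization only yields decrease controlled by the prox-gradient mapping, not by $\lVert x^{(k+1)}-x^{(k)}\rVert$; with non-unique block minimizers the successive differences need not tend to zero at all. Without that, you cannot pass the inclusion $0\in\nabla_{x_1}G(x_1^{(k+1)},x_2^{(k)})+\partial G_1(x_1^{(k+1)})$ to the limit, because $x_2^{(k)}$ and $x_2^{(k+1)}$ may converge to different points along your subsequence. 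The standard repair (this is how \cite{Beck2017} argues) avoids successive differences entirely: use monotonicity of the values and lower semicontinuity to show that any accumulation point minimizes $F$ in each block separately, and then use joint convexity together with differentiability of the coupling term to upgrade blockwise optimality to global optimality.

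Second, for the $\mathcal O(1/k)$ rate you yourself flag that the construction of a full subgradient $s^{(k)}\in\partial F(x^{(k)})$ is the ``main obstacle'' and leave it open; but this construction \emph{is} the proof, so the claimed recursion $a_{k+1}\le a_k-\gamma a_k^2$ is unsubstantiated as it stands. The standard resolution: after the $x_2$-update one has $0\in\nabla_{x_2}G(x^{(k+1)})+\partial G_2(x_2^{(k+1)})$, and from the preceding $x_1$-update $-\nabla_{x_1}G(x_1^{(k+1)},x_2^{(k)})\in\partial G_1(x_1^{(k+1)})$, so $s^{(k+1)}\coloneqq\bigl(\nabla_{x_1}G(x^{(k+1)})-\nabla_{x_1}G(x_1^{(k+1)},x_2^{(k)}),\,0\bigr)$ lies in $\partial F(x^{(k+1)})$; its norm is then bounded by $L\lVert x_2^{(k+1)}-x_2^{(k)}\rVert$ \emph{only} under a block-Lipschitz-gradient assumption on $G$, which is again absent from the statement as given (it does hold in the paper's application, where $G$ is quadratic, and it is assumed in the cited theorems of \cite{Beck2017}). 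So your outline follows the same route as the cited reference, but both the accumulation-point argument and the rate are incomplete and silently rely on a smoothness hypothesis that the statement does not provide.
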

For our specific discretized problem \eqref{CoupledReco} with
\begin{align*}
G(F_0,\bar {\vec F})  &\coloneqq \beta \sum_{x\in \grid} | F_0(x)- F_1(x) |^2 w_{1}(x),\\
G_1(\bar {\vec F} )&\coloneqq \beta \sum_{k=0}^{K-1} \sum_{x \in \grid} | F_{k}(x) - F_{k+1}(x)|^2 w_{k+1}(x),\\
G_2(F_0) &\coloneqq \frac{1}{2} \| A F_0 - B\|^2_{\mathcal Y} 
+ \alpha TV(F_0),
\end{align*}	
the conditions of the theorem are obviously fulfilled. 
If $F_0$ is fixed, Corollary \ref{cor:tridiag} implies that the minimizer of $G(F_0,\bar {\vec F}) + G_1(\bar {\vec F})$
is given analytically.
In the second step of the algorithm we have to minimize, for fixed $\bar{\vec F}$,
the functional
\begin{equation}\label{eq:OurReco}
G(F_0,\bar {\vec F}) +	G_2(F_0) = \beta \sum_{x \in \grid} |F_{0}(x)  - F_1(x)|^2 w_{1}(x) 
+ \frac{1}{2} \| A F_0 - B\|^2 
+ \alpha TV(F_0).
\end{equation}
This can be done efficiently by primal-dual algorithms from convex analysis, see Section~\ref{sec:palm} for a discussion.
Finally, we use scattered interpolation to obtain the images $\vec I$ at grid points from $\vec F$.
\subsection{Multilevel Approach}
	As usual in optical flow and image registration, we apply a coarse-to-fine strategy with $\operatorname{lev}\in\NN$ levels if a downsampling procedure for the data and the operator is known.
	This is the case for our numerical experiments, but it is also possible to use only a single level if no downsmapling procedure is known.
	First, we iteratively smooth our given template image by convolution with a truncated Gaussian and downsampling using bilinear interpolation. 
	Here special care is necessary for the operator $A$, 
	as well as for the downsampling procedure of the data $B$, 
	which is dependent on the operator choice. 
	Both procedures are described in the respective numerical examples.
	
	In order to obtain a deformation on the coarsest level, a single registration is performed with the solution of the $L^2$-$\TV$ problem, i.e.,
	\begin{equation}\label{TVReco}
	I_{0,\operatorname{lev}} = 
	\argmin_{ I\colon\grid_{\operatorname{lev}}\to\RR}\left\{ \frac12 \lVert A_{\operatorname{lev}} I - B_{\operatorname{lev}}\rVert^2
	+\ \alpha \TV(I)\right\},\quad \alpha > 0,
	\end{equation}
	where $\operatorname{lev}\in\NN$ is number of levels. 
	For better results the regularization parameters for $v$ are decreased successively as recommended by Modersitzki \cite{Mod2009}.
	
	After computing a solution on every level, bilinear interpolation is applied to construct an initial deformation on the next finer level. 
	The sequence of $\tilde K -1$, $\tilde K < K$, intermediate finer level images is initialized from the end
	\begin{equation}\label{eq:inter:img}
	I_k(x) = R\bigl(x+\tfrac{k}{\tilde K}  Pv(x)\bigr),
	\end{equation}
	where $R$ is the template image at the current level. 
	Using this we obtain an initial image sequence on this level.
	The complete multilevel strategy is sketched in Algorithm~\ref{alg:morph} for the alternating minimization scheme presented in Section~\ref{sec:alternativapproach}.
	
	\begin{algorithm}[htb]
		\caption{TDM-INV Algorithm (informal) }\label{alg:morph}
		\begin{algorithmic}[1]
			\State $R_0 \coloneqq  R, B_0 \coloneqq B, \grid_0 \coloneqq \grid$
			\State create image stack $(R_l)_{l=0}^{\operatorname{lev}}, (B_l)_{l=0}^{\operatorname{lev}}$ on $(\grid_l)_{l=0}^{\operatorname{lev}}$ 
			by downsampling
			\State solve \eqref{TVReco} for $B_{\operatorname{lev}}$
			\State solve \eqref{eq:regu} for $R_{\operatorname{lev}}, I_{0,\operatorname{lev}}$ to get $\tilde{v}$
			\State $l \to \operatorname{lev}-1$
			\State use bilinear interpolation to get $v$ on $\grid_{l}$ from $\tilde{v}$
			\State obtain $\tilde{K}_{l}$ images $\vec I_{l}^{(0)}$ from $R_{l},v$ by \eqref{eq:inter:img}
			\While{$l\ge0$}
			\Repeat (Alternating outer iteration)
			\State find deformations $\tilde{\vec v}_l^{(i+1)}$ minimizing \eqref{eq:regu} for every pair from $\vec I_l^{(i)}$
			\State initialize $\vec F^{(0)} = \vec I_l^{(i)}$
			\Repeat (Alternating inner iteration)
			\State for fixed $F_0^{(j)}$ compute $F_1^{(j+1)},\cdots, F_{K-1}^{(j+1)}$ according to Corollary~\ref{cor:tridiag}
			\State for fixed $F_1^{(j+1)}$ compute $F_0^{(j+1)}$ as solution of \eqref{eq:OurReco} using a PD-method
			\State $j \to j+1$
			\Until convergence criterion is reached
			\State compute $\vec I_l^{(i)}$ from $F^{(j)}$ using scattered interpolation
			\State $i \to i+1$
			\Until convergence criterion is reached
			\State $l\to l-1$
			\If{$l>0$}
			\State use bilinear interpolation to get $\vec I_l$ and $\vec  v_l$ on $\grid_l$
			\For{$k = 1,\dots,\tilde K _l$}
			\State calculate $\tilde{K}_l$ intermediate images between $I_{l,k-1},I_{l,k}$ with $v_{l,k}$ using~\eqref{eq:inter:img}
			\EndFor
			\EndIf
			\EndWhile	
			\vspace{0.4cm}\State $\vec I \coloneqq \vec I_0$
		\end{algorithmic}
	\end{algorithm}

\section{Numerical Examples} \label{sec:numerics}		
In this section, numerical examples demonstrating the potential of the method are presented.
The proposed Algorithm~\ref{alg:morph} is implemented using Matlab.
We also implemented the minimization of the TDM-INV model using PALM, but observed higher computation times due to many operator evaluations.
For the Radon transform, the computation roughly needed two times as long (about 5-10 minutes).
As comparison a result using PALM is added in the first example.
The qualitative differences between the two results are very small 
and therefore only the results of Algorithm~\ref{alg:morph} are shown in the remaining experiments.
Note that PALM might be more favourable if the operator $A$ is simple to evaluate, e.g.~if it is sparse.

For representing our images on a grid during the registration step,
we applied the \lstinline|mex| interface of the spline library by E.~Bertolazzi~\cite{Splines} 
with the Akima splines.
In order to reduce the number of involved parameters in \eqref{eq:regu}, 
we use $\lambda = \mu = \nu = 100\eta$ in all our experiments.
Typical choices for the increments $\tilde K$ are $\tilde K_{\text{lev}-1} = 2$, $\tilde K_{\text{lev}-2}=1$ and $\tilde K_i=0$ for the remaining levels.
The remaining parameters $\alpha$, $\beta$ and $\lambda$ are optimized with respect to the SSIM via a gridsearch.
For the comparison algorithms the parameters are SIMM optimized, too.
A GPU implementation is applied for solving the appearing linear systems of equations in the Quasi-Newton method.

In the first part of our experiments, the Radon transform is considered as operator.
Among the vast literature on the topic, 
we refer to the books \cite{He1980,Ku2014,N2001}
for a general introduction to CT
including some reconstruction methods from incomplete data
and for limited angle tomography e.g.~to \cite{Dav1983,HD2008,Louis1986}.
The second part deals with superresolution, 
which does not have a continuous counterpart.

\subsection{Limited Angle and Sparse CT}
We are given a reference image $R \in [0,1]^{256,256}$ 
and sinogram data of a target image $I_{\mathrm{orig}} \in[0,1]^{256,256}$, 
which we want to reconstruct. 
For the numerical implementation of the (discrete) Radon transform the Astra toolbox \cite{AstraGPU,Astra2,Astra1} is used, 
which allows more flexibility compared to the built-in Matlab function.

In our \textbf{first example}, the reference image consists of 6 triangular shaped objects, which are deformed to stars in the target image, see Fig.~\ref{fig:triangles}\footnote{The images in Fig.~\ref{fig:triangles}(a) and (b) are taken from the paper \cite{CO18} 
and were provided by Barbara Gris and Ozan \"Oktem.}. 
The sinogram is obtained by the Radon transform
using 10 measurement directions equally distributed (with steps of 9 degrees) from 0 to 81 degrees, 
i.e.~the measurement angle is limited to less than the half domain. 
The sinogram is additionally corrupted with 5 percent Gaussian noise. 
Our goal is to reconstruct the target from the given sinogram data. 
In the proposed multi grid approach a down-sampling by a factor of 0.5 is used. 
For the down-sampling of the sinogram, two neighboring rays are averaged and rescaled to the correct intensity. 
Note that this is easily possible if the number of rays is chosen for example to be 1.5 times the number of pixels per direction.
The result of our TDM-INV algorithm is shown in Fig.~\ref{fig:triangles:ourresult},
where the parameters $\operatorname{lev}=4$, $\lambda=0.07$, $\alpha = 0.05$ and $\beta = 0.1$ are used. 
Compared to the reconstruction by the $L^2$-$TV$ model (with $\lambda_{\TV} = 0.05$) in Fig.~\ref{fig:triangles:L2TV}, our method is able to better deal with the missing data from 81 to 180 degrees.
Visually, the result is almost perfect and also the SSIM value is very good.
In Fig.~\ref{fig:triangles:palm} the numerical result using PALM is shown. 
The SSIM and PSNR values are similar to Fig.~\ref{fig:triangles:ourresult} 
and almost no difference is visible. 
The difference of both results is depicted in Fig.~\ref{fig:triangles:diff} and lies within the color range $[-0.07, 0.06]$.

\begin{figure}
	\centering
	\begin{subfigure}[t]{0.35\textwidth}
		\centering
		\includegraphics[width = 0.98\textwidth]{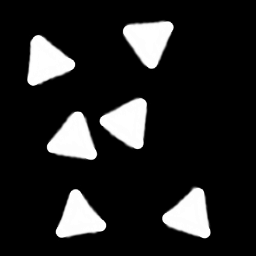}
		\caption{Reference image.}
	\end{subfigure}		
	\begin{subfigure}[t]{0.35\textwidth}
		\centering
		\includegraphics[width = 0.98\textwidth]{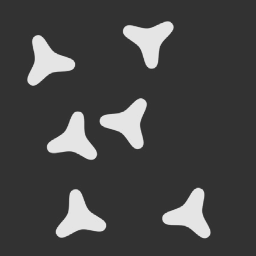}
		\caption{Target image.}
	\end{subfigure}\\	
	\begin{subfigure}[t]{0.35\textwidth}
		\centering
		\includegraphics[width = 0.98\textwidth]{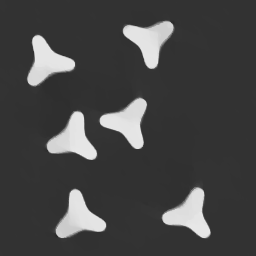}
		\caption{Result by TDM-INV.\\ (SSIM .9815, PSNR 30.31)}\label{fig:triangles:ourresult}
	\end{subfigure}
	\begin{subfigure}[t]{0.35\textwidth}
		\centering
		\includegraphics[width = 0.98\textwidth]{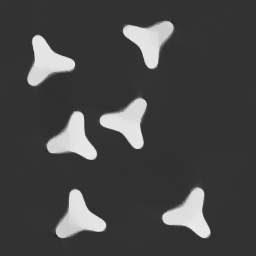}
		\caption{Result by TDM-INV using PALM.\\ (SSIM .9815, PSNR 30.40)}\label{fig:triangles:palm}
	\end{subfigure}
	\begin{subfigure}[t]{0.35\textwidth}
		\centering
		\includegraphics[width = 0.98\textwidth]{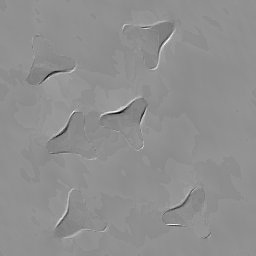}
		\caption{Difference of the results with range $[-0.07, 0.06]$.}\label{fig:triangles:diff}
	\end{subfigure}		
	\begin{subfigure}[t]{0.35\textwidth}
		\centering
		\includegraphics[width = 0.98\textwidth]{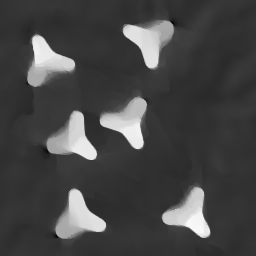}
		\caption{Result by $L^2$-$TV$.\\ (SSIM .9377, PSNR 24.89)}\label{fig:triangles:L2TV}
	\end{subfigure}
	\caption{Image reconstruction from sparse, limited angle CT measurements from 0 to 81 degrees with 10 angles.}\label{fig:triangles}	
\end{figure}

In the \textbf{second example} a more structured image is treated. 
The given reference image depicts an artificial brain image, and the target can be considered as a deformed version, 
see Fig.~\ref{fig:tomo}\footnote{Available at \url{http://bigwww.epfl.ch/algorithms/mriphantom/}, see also~\cite{GLPU12}.}. 
The sinogram of the target is created using the Radon transform with 
20 measurements equally distributed from 0 to 180 degrees and by adding 5 percent Gaussian noise.

\begin{figure}
	\centering
	\begin{subfigure}[t]{0.45\textwidth}
		\centering
		\includegraphics[width = 0.98\textwidth]{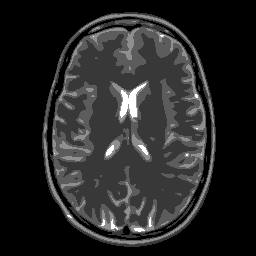}
		\caption{Reference image.}
	\end{subfigure}		
	\begin{subfigure}[t]{0.45\textwidth}
		\centering
		\includegraphics[width = 0.98\textwidth]{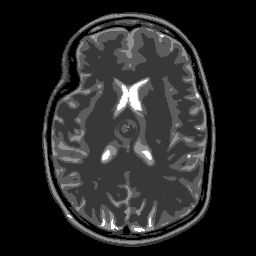}
		\caption{Target image.}
	\end{subfigure}		
	\begin{subfigure}[t]{0.45\textwidth}
		\centering
		\includegraphics[width = 0.98\textwidth]{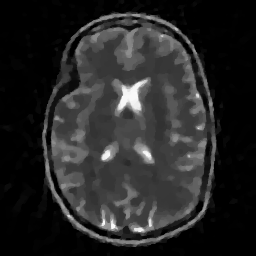}
		\caption{Result by TDM-INV. \\(SSIM .7542, PSNR 26.47)}\label{fig:tomo:ourresult}
	\end{subfigure}		
	\begin{subfigure}[t]{0.45\textwidth}
		\centering
		\includegraphics[width = 0.98\textwidth]{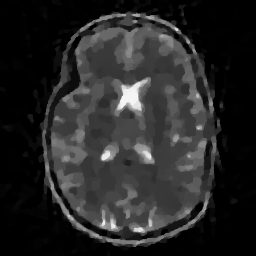}
		\caption{Result  by $L^2$-$TV$\label{fig:tomo:L2TV}.\\(SSIM .6819, PSNR 24.20)}
	\end{subfigure}
	\caption{Image reconstruction from sparse CT measurements using 20 angles from 0 to 180 degrees.}\label{fig:tomo}
\end{figure}
For the multi grid approach the procedure from the previous example is used.
The result of TDM-INV is shown in Fig.~\ref{fig:tomo:ourresult} and was calculated 
with the parameters $\operatorname{lev}=5$, $\lambda=0.08$, $\alpha = 0.025$ and $\beta = 0.5$.
Since our model incorporates the reference information as compensation for the sparse data set, the reconstruction is better than the one with the $L^2$-$TV$ model (with $\lambda_{\TV} = 0.1$) in Fig.~\ref{fig:tomo:L2TV}.

\subsection{Superresolution}
\begin{figure}
	\centering
	\begin{subfigure}[t]{0.32\textwidth}
		\centering
		\includegraphics[width = 0.98\textwidth]{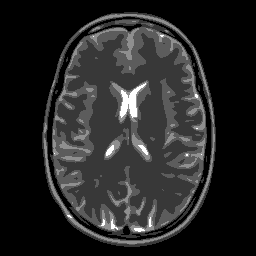}
		\caption{Reference image.}
	\end{subfigure}		
	\begin{subfigure}[t]{0.32\textwidth}
		\centering
		\includegraphics[width = 0.98\textwidth]{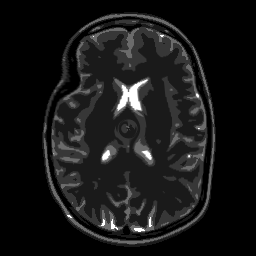}
		\caption{Target image.}
	\end{subfigure}	
	\begin{subfigure}[t]{0.32\textwidth}
		\centering
		\includegraphics[width = 0.98\textwidth]{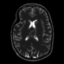}
		\caption{Low resolution image. (SSIM  .7681, PSNR  24.02)}
	\end{subfigure}	
	
	\begin{subfigure}[t]{0.32\textwidth}
		\centering
		\includegraphics[width = 0.98\textwidth]{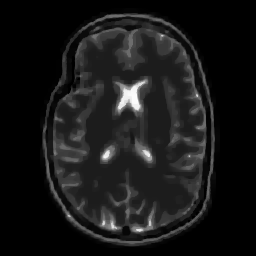}
		\caption{Result by TDM-INV. (SSIM  .8767, PSNR~27.46)}\label{fig:sr:ourresult}
	\end{subfigure}		
	\begin{subfigure}[t]{0.32\textwidth}
		\centering
		\includegraphics[width = 0.98\textwidth]{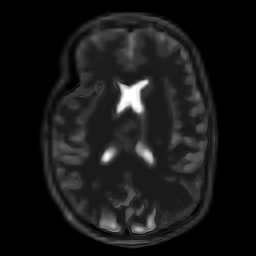}
		\caption{Result by He/Siu~\cite{HS11}. (SSIM  .7823, PSNR  24.25)}\label{fig:sr:gpr}
	\end{subfigure}
			
	\begin{subfigure}[t]{0.32\textwidth}
		\centering
		\includegraphics[width = 0.98\textwidth]{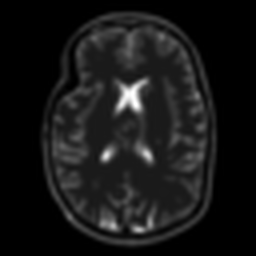}
		\caption{Result by Matlab SR. \\(SSIM  .8111, PSNR  25.76)} \label{fig:sr:matlab}
	\end{subfigure}		
	\begin{subfigure}[t]{0.32\textwidth}
		\centering
		\includegraphics[width = 0.98\textwidth]{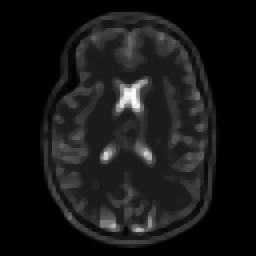}
		\caption{Result by $L^2$-$TV$.\\
		(SSIM  .8075, PSNR 24.75)}\label{fig:sr:l2tv}
	\end{subfigure}
	\caption{Superresolution from $64\times64$ pixels to $256\times256$ for brain image.}\label{fig:sr}
\end{figure}

Here, we are given a reference image $R \in [0,1]^{256,256}$ 
and a low resolution image $B\in[0,1]^{64,64}$ 
obtained by down-sampling of a target image $I_{\mathrm{orig}}\in [0,1]^{256,256}$
with the down-sampling operator $P_4\in\RR^{256,64}$ given by
\begin{equation*}
P_{4} = \frac{1}{4}	\left(\begin{array}{@{}*{12}{c}@{}}		
1 & 1 & 1 & 1 & 0 &   &   &   &   &  &  & 0\\
0 & 0 & 0 & 0 & 1 & 1 & 1 & 1 & 0 &  & & 0\\
&   &   &   &\ddots&&   &\ddots&& &  &  \\
0 &   &   &   &   & &  & 0 & 1 & 1 & 1 & 1 
\end{array}\right)\in\RR^{256,64}.
\end{equation*} 
In other words, $B = P_4 I_{\mathrm{orig}} P_4^\tT$.
For the multi grid approach a downscaling with factor $0.5$ is applied such that the given image $B$ can be used for the first three levels, i.e., $B_0=B_1=B_2\in \RR^{64,64}$. 
The matrix $P_4\in\RR^{256,64}$ is adapted to $P_2\in\RR^{128,64}$ for the second level and the identity matrix of corresponding size is used for all higher levels. 

In our \textbf{third example} the same reference and target images as in the second example are used, see Fig.~\ref{fig:sr}.
The result of TDM-INV is shown in Fig.~\ref{fig:sr:ourresult}, where the parameters $\operatorname{lev}=4$, $\lambda = 0.01$, $\alpha = 0.001$ and $\beta = 2$ are used.
First, our method is compared with the single image superresolution method 
of He and Siu~\cite{HS11}, which is based on a self-similarity assumption of the high and low resolution image together with a Gaussian process regression. 
In contrast to the result obtained by this method in Fig.~\ref{fig:sr:gpr}, 
our result does not have artifacts around the bright features. 
Using the Matlab function \lstinline|imresize|, the best reconstruction is obtained with the ``lanczos3'' kernel, see Fig.~\ref{fig:sr:matlab}, 
which is affected by a strong blur.
For this example, the $L^2$-$\TV$  (parameter $\lambda_{\TV} = 0.001$)
reconstruction yields the result shown in Fig.~\ref{fig:sr:l2tv}. 
Comparing all methods, we see that our method is best at recovering the fine details as well as the overall structure.

\begin{figure}
	\centering
	\begin{subfigure}[t]{0.32\textwidth}
		\centering
		\includegraphics[width = 0.98\textwidth]{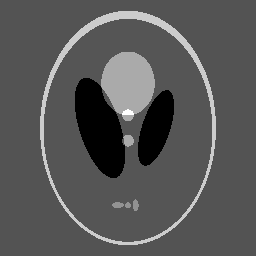}
		\caption{Reference image.}\label{fig:sr2:source}
	\end{subfigure}		
	\begin{subfigure}[t]{0.32\textwidth}
		\centering
		\includegraphics[width = 0.98\textwidth]{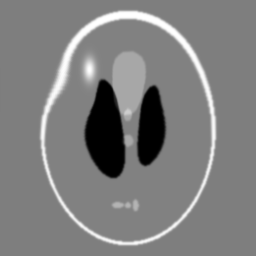}
		\caption{Target image.}\label{fig:sr2:target}
	\end{subfigure}	
	\begin{subfigure}[t]{0.32\textwidth}
		\centering
		\includegraphics[width = 0.98\textwidth]{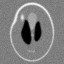}
		\caption{Low resolution image. (SSIM  .7284, PSNR 25.47)}\label{fig:sr2:given}
	\end{subfigure}	
	
	\begin{subfigure}[t]{0.32\textwidth}
		\centering
		\includegraphics[width = 0.98\textwidth]{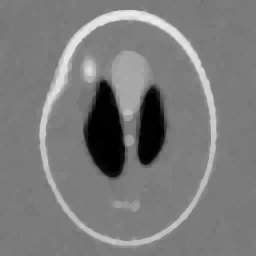}
		\caption{Result by TDM-INV. (SSIM .9345, PSNR~28.98)}\label{fig:sr2:ourresult}
	\end{subfigure}		
	\begin{subfigure}[t]{0.32\textwidth}
		\centering
		\includegraphics[width = 0.98\textwidth]{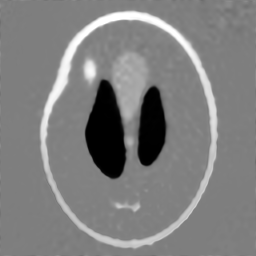}
		\caption{Result by He/Siu~\cite{HS11}. (SSIM .9288, PSNR  28.00)}\label{fig:sr2:gpr}
	\end{subfigure}
	
	\begin{subfigure}[t]{0.32\textwidth}
		\centering
		\includegraphics[width = 0.98\textwidth]{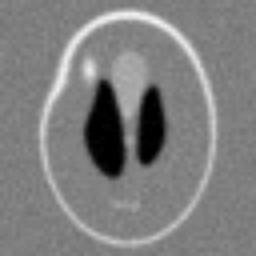}
		\caption{Result by bilinear interpolation (Matlab).\\ (SSIM .8474, PSNR  26.38)}\label{fig:sr2:matlab}
	\end{subfigure}		
	\begin{subfigure}[t]{0.32\textwidth}
		\centering
		\includegraphics[width = 0.98\textwidth]{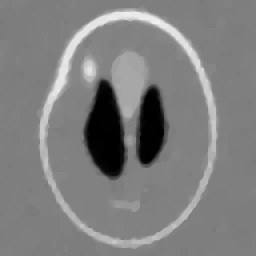}
		\caption{Result by $L^2$-$ \TV$.\\ (SSIM .9226, PSNR 27.49)}\label{fig:sr2:l2tv}
	\end{subfigure}
	\caption{Superresolution from $64\times64$ pixels to $256\times256$ for Shepp-Logan phantom.}\label{fig:sheppdown}
\end{figure}

In our \textbf{last example} , the template image is not only deformed and scaled, 
but also a new detail is included in the image.  
As mass can be created on the image path, our method is able to reconstruct
also the small detail, cf.~Fig.~\ref{fig:sheppdown} \footnote{The images used in Fig.~\ref{fig:sheppdown} are based on the ones in \cite{GO18}.}, where the parameters are chosen as $\operatorname{lev}=4$, $\lambda = 0.01$, $\alpha = 0.001$ and $\beta = 2$.
For this simpler image, our method leads to the best result in SSIM and PSNR. 
The result produced by~\cite{HS11} in Fig.~\ref{fig:sr2:gpr} yields almost the same SSIM , 
but visually the method recovers a lot of background noise. 
The best result of Matlab's \lstinline|imresize| is given by the ``bilinear'' interpolation here.
However, this result is affected by a strong blur. 
The $L^2$-$TV$ approach  (parameter $\lambda_{\TV} = 0.001$) works better for this simpler image than in the previous example, 
but is still not able to match our result. 
Especially the overlapping part in the center of the phantom is only recovered by TDM-INV.

\section{Conclusions} \label{sec:conclusions}
This paper merges the edge-preserving $L^2$-$\TV$ variational mo\-del for solving inverse image reconstruction problems  
with a metamorphosis-inspired approach 
to utilize information from a reference image. 
The approach, called TDM-INV, can handle intensity changes between the reference image and the target image which we want to reconstruct.
The method gives very good results for artificial images so that 
we are looking forward to real-world applications in material sciences or medical imaging, e.g.~motion models for organs \cite{EL13,GJDS15}.
Several extensions of the model are possible. 
Due to the finite difference approach and the design of the method more sophisticated regularizers than the TV-term can be simply involved.
Another possible modification would be to apply different transport models, see e.g.~\cite{MRSS15}.
Further, the usage of multiple reference images can be taken into account. 

\appendix
\section{Gagliardo-Nirenberg Inequality}\label{app:gn}

	\begin{theorem}[Gagliardo-Nirenberg \cite{Nir1966}]\label{th:gn}
		Let $\Omega\subset\RR^n$ be a bounded domain satisfying the cone property. 
		For $1\le q, r\le\infty$, 
		suppose that $f$ belongs to $L^q(\Omega)$ and its derivatives of order $m$ to $L^r(\Omega)$. 
		Then for the derivatives $D^j f$, $0\le j < m$, the following inequalities hold true
		with constants $C_1,C_2$ independent of $f$:
		\begin{equation*}
			\lVert D^j f \rVert_{L^p(\Omega)}\le C_1\lVert D^m f \rVert_{L^r(\Omega)}^a\lVert f\rVert_{L^q(\Omega)}^{1-a}+C_2\lVert f \rVert_{L^q(\Omega)},
		\end{equation*}
		where
		$
			\frac{1}{p} = \frac{j}{n}+a\Bigl(\frac{1}{r}-\frac{m}{n}\Bigr)+(1-a)\frac{1}{q}
		$
		for all $a \in [\frac{j}{m}, 1]$, except for the case $1<r<\infty$ and $m-j-\frac{n}{r}$ is a nonnegative integer, in which the inequality 
		only holds true for  $a \in [\frac{j}{m},1)$.
	\end{theorem}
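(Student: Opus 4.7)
The plan is to follow Nirenberg's original strategy: first establish the inequality on the whole space $\RR^n$, then transfer it to $\Omega$ using an extension operator whose existence is guaranteed by the cone property. On $\RR^n$ the homogeneity under scaling $f(x)\mapsto f(\lambda x)$ forces the stated relation between $p,q,r,j,m,n,a$, so the constraint on $a$ is not a free choice but the only one that can possibly hold.

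First I would handle the one-dimensional base case $n=1$, $m=2$, $j=1$: for $f\in C^2_c(\RR)$ and any interval $I$ of length $\ell>0$, Taylor's theorem and integration by parts give $|f'(x)|\le C\ell^{-1}\sup_I|f| + C\ell\sup_I|f''|$ at a suitable interior point, and a covering/packing argument together with Hölder's inequality upgrades this to the $L^p$-$L^q$-$L^r$ version by optimizing $\ell$ pointwise. The general one-dimensional case with arbitrary $0\le j<m$ then follows by iterating this intermediate-derivative estimate.

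Next I would pass to $\RR^n$ by applying the one-dimensional estimate separately along each coordinate direction and combining the results via Fubini and Hölder, with the multi-index exponent bookkeeping dictated again by the scaling identity. To transfer to the bounded Lipschitz-like domain $\Omega$, the cone property supplies a bounded extension $E\colon W^{m,r}(\Omega)\cap L^q(\Omega)\to W^{m,r}(\RR^n)\cap L^q(\RR^n)$; plugging $Ef$ into the whole-space inequality and using continuity of $E$ yields the claim, with the additive $C_2\lVert f\rVert_{L^q(\Omega)}$ absorbing the terms produced because $E$ controls the full Sobolev norm rather than the top-order seminorm alone.

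The hard part will be the exceptional case $1<r<\infty$ with $m-j-n/r\in\NN_0$, where the endpoint $a=1$ sits exactly on a critical Sobolev embedding into $L^\infty$ that fails by logarithmic blow-up. There two things are needed: a genuine proof of the inequality on the open interval $a\in[j/m,1)$, which can be done by real interpolation through Lorentz spaces or by quantitative BMO estimates, and a counterexample of John--Nirenberg type showing the bound truly breaks at $a=1$ so that the interval cannot be closed. Everything else is essentially Hölder, Fubini, and careful scaling.
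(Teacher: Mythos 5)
The paper does not actually prove Theorem~\ref{th:gn}; it is quoted verbatim from Nirenberg \cite{Nir1966} and only the special case $p=q=r=2$ of Remark~\ref{th:gnr} is used, so there is no internal proof to compare against. Judging your sketch on its own terms, the one-dimensional Landau--Kolmogorov step, the iteration over $j<m$, and the passage to $\RR^n$ by slicing along coordinate directions are the standard skeleton and are fine in outline (though the exponent bookkeeping in step three is exactly where the real work sits, and in the exceptional case $1<r<\infty$, $m-j-\tfrac{n}{r}\in\NN_0$ your appeal to ``real interpolation through Lorentz spaces or BMO'' is a placeholder rather than an argument; also note that a counterexample at $a=1$ is not needed, since the theorem only asserts the inequality for $a\in[\tfrac{j}{m},1)$ there).

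The genuine gap is the transfer to $\Omega$. The cone property does \emph{not} supply a bounded extension operator: the slit disk satisfies the cone condition, yet a function that is smooth on the slit domain and jumps across the slit lies in $W^{m,r}(\Omega)$ while admitting no extension in $W^{1,r}(\RR^n)$, so your operator $E$ simply does not exist for such domains. Even for genuinely nice (Lipschitz) domains you would additionally need $E$ to be bounded simultaneously on $L^q$ and on $W^{m,r}$ with $q\neq r$, i.e.\ a Stein-type total extension, which is again more than the cone property gives. The classical proofs (Nirenberg's original one, and the version in Adams' book) therefore do not extend at all: they use the cone property directly, covering $\Omega$ by finitely many congruent cones (or cubes built from them), proving the interpolation inequality on each such piece by the one-dimensional argument integrated over directions in the cone, and summing; the additive term $C_2\lVert f\rVert_{L^q(\Omega)}$ arises precisely from this localization on a bounded domain. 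To repair your proof you should replace the extension step by such a direct covering argument, or else strengthen the hypothesis from the cone property to a Sobolev (Stein) extension domain, which would prove a weaker theorem than the one stated.
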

	\begin{remark}\label{th:gnr}
		For $p=q=r=2$ the inequality simplifies to
		\begin{align*}
			\lVert D^j f \rVert_{L^2(\Omega)} &\le C_1\lVert D^m f \rVert_{L^2(\Omega)}^{\frac{j}{m}}\lVert f\rVert_{L^2(\Omega)}^{1-\frac{j}{m}}+C_2\lVert f \rVert_{L^2(\Omega)}
			\\
			&\leq C_1\lVert D^m f \rVert_{L^2(\Omega)} + \big(C_1 + C_2 \big) \lVert f \rVert_{L^2(\Omega)},
		\end{align*}
		where the second inequality follows by estimating the product with the maximum of both factors.
	\end{remark}
	
\subsection*{Acknowledgments} 
    This work was initialized during an internship of S.~Neumayer in the research group of C.~Sch\"onlieb at the University of Cambridge. 
    S.~Neumayer wants to thank B.~Gris, O.~\"Oktem and C.~Sch\"onlieb for stimulating talks on the topic.
    Further, we want to thank A.~Effland for discussions on PALM.
	Funding by the German Research Foundation (DFG) with\-in the project STE 571/13-1 
	and with\-in the Research Training Group 1932,
	project area P3, is gratefully acknowledged.
	We gratefully acknowledge the support of NVIDIA Corporation with the donation of the Quadro M5000 GPU used for this research.

\bibliographystyle{abbrv}
\bibliography{references}	
\end{document}